\documentclass{article}
\usepackage{amsmath,amssymb,amsthm,graphicx,epsfig,subfigure,float,url}
\usepackage[colorlinks=true]{hyperref}
\usepackage{pdfsync}

\topmargin -1cm
\textheight21cm
\textwidth15cm 
\oddsidemargin1cm

\newcommand{\R}{\mathrm{I\kern-0.21emR}}
\newcommand{\N}{\mathrm{I\kern-0.21emN}}

\renewcommand{\leq}{\leqslant}

\newtheorem{theorem}{Theorem}

\newtheorem{lemma}{Lemma}
\theoremstyle{definition}\newtheorem{example}{Example}
\theoremstyle{definition}\newtheorem{remark}{Remark}

\title{The turnpike property in finite-dimensional nonlinear optimal control}

\author{Emmanuel Tr\'elat\footnote{Sorbonne Universit\'es, UPMC Univ Paris 06, CNRS UMR 7598, Laboratoire Jacques-Louis Lions, Institut Universitaire de France, F-75005, Paris, France \texttt{emmanuel.trelat@upmc.fr}.}
\and Enrique Zuazua\footnote{BCAM - Basque Center for Applied Mathematics, Mazarredo, 14 E-48009 Bilbao-Basque Country-Spain.}\ \footnote{Ikerbasque, Basque Foundation for Science, Alameda Urquijo 36-5, Plaza Bizkaia, 48011, Bilbao-Basque Country-Spain (\texttt{zuazua@bcamath.org}).}
}

\date{}

\begin{document}

\maketitle

\begin{abstract}
Turnpike properties have been established long time ago in finite-dimensional optimal control problems arising in econometry. They refer to the fact that, under quite general assumptions, the optimal solutions of a given optimal control problem settled in large time consist approximately of three pieces, the first and the last of which being transient short-time arcs, and the middle piece being a long-time arc staying exponentially close to the optimal steady-state solution of an associated static optimal control problem. We provide in this paper a general version of a turnpike theorem, valuable for nonlinear dynamics without any specific assumption, and for very general terminal conditions. Not only the optimal trajectory is shown to remain exponentially close to a steady-state, but also the corresponding adjoint vector of the Pontryagin maximum principle.
The exponential closedness is quantified with the use of appropriate normal forms of Riccati equations. We show then how the property on the adjoint vector can be adequately used in order to initialize successfully a numerical direct method, or a shooting method. In particular, we provide an appropriate variant of the usual shooting method in which we initialize the adjoint vector, not at the initial time, but at the middle of the trajectory.
\end{abstract}

\noindent\textbf{Keywords:} optimal control; turnpike; Pontryagin maximum principle; Riccati equation; direct methods; shooting method.

\medskip

\noindent\textbf{AMS classification:} 
49J15, 
49M15 

\section{Introduction and main result}\label{sec:intro}

\paragraph{Dynamical optimal control problem.}
Consider the nonlinear control system
\begin{equation}\label{contsys}
\dot{x}(t) = f(x(t),u(t)),
\end{equation}
where $f:\R^n\times\R^m\rightarrow\R^n$ is of class $C^2$.
Let $R=(R^1,\ldots,R^k):\R^n\times\R^n\rightarrow\R^k$ be a mapping of class $C^2$, and let $f^0:\R^n\times\R^m\rightarrow\R$ be a function of class $C^2$ .
For a given $T>0$ we consider the optimal control problem $\bf (OCP)_T$ of determining a control $u_T(\cdot)\in L^\infty(0,T;\R^m)$ minimizing the cost functional
\begin{equation}\label{cost}
C_T(u) = \int_0^T f^0(x(t),u(t))\,  dt
\end{equation}
over all controls $u(\cdot)\in L^\infty(0,T;\R^m)$, where $x(\cdot)$ is the solution of \eqref{contsys} corresponding to the control $u(\cdot)$ and such that
\begin{equation}\label{terminalcond}
R(x(0),x(T))=0.
\end{equation}

We assume throughout that $\bf (OCP)_T$ has an optimal solution $(x_T(\cdot),u_T(\cdot))$.
Conditions ensuring the existence of an optimal solution are well known (see, e.g., \cite{Cesari,Trelatbook}). For example, if the set of velocities $\{ f(x,u) \mid u\in\R^m \}$ is a convex subset of $\R^n$ for every $x\in\R^n$ and if the epigraph of $f^0$ is convex, then there exists at least one optimal solution. Note that this is the case whenever the system \eqref{contsys} is control-affine, that is, $f(x,u) = f_0(x) + \sum_{i=1}^m u_i f_i(x)$, where the $f_i$'s, $i=0,\ldots,m$, are $C^1$ vector fields in $\R^n$, and $f^0$ is a positive definite quadratic form in $(x,u)$. The classical linear quadratic problem fits in this class (and in that case the optimal solution is moreover unique).

According to the Pontryagin maximum principle (see \cite{AgSachkov,Pontryagin,Trelatbook}), there must exist an absolutely continuous mapping $\lambda_T(\cdot):[0,T]\rightarrow\R^n$, called \textit{adjoint vector}, and a real number $\lambda^0_T\leq 0$, with $(\lambda_T(\cdot),\lambda^0_T)\neq (0,0)$, 
such that, for almost every $t\in[0,T]$,
\begin{equation}\label{extremalsyst}
\begin{split}
&\dot{x}_T(t) = \frac{\partial H}{\partial \lambda}(x_T(t),\lambda_T(t),\lambda^0_T, u_T(t)), \\
&\dot{\lambda}_T(t) = -\frac{\partial H}{\partial x}(x_T(t),\lambda_T(t),\lambda^0_T, u_T(t)),\\
&\frac{\partial H}{\partial u} (x_T(t),\lambda_T(t),\lambda^0_T, u_T(t)) = 0,
\end{split}
\end{equation}
where the Hamiltonian $H$ of the optimal control problem is defined by
\begin{equation}\label{defH}
H(x,\lambda,\lambda^0,u) = \langle\lambda,f(x,u)\rangle +\lambda^0 f^0(x,u),
\end{equation}
for all 
$(x,\lambda,u)\in\R^n\times\R^n\times\R^m$. Moreover we have transversality conditions: there exist $(\gamma_1,\ldots,\gamma_k)\in \R^k$ such that
\begin{equation}\label{condtransv}
\begin{pmatrix} -\lambda_T(0)\\ \phantom{-} \lambda_T(T)\end{pmatrix} = \sum_{i=1}^k \gamma_i \nabla R^i(x_T(0),x_T(T)).
\end{equation}

\begin{remark}\label{remBC}
The integer $k\leq 2n$ is the number of relations imposed to the terminal conditions in $\bf (OCP)_T$. Let us describe some typical situations.
\begin{itemize}
\item If the initial and final points are fixed in $\bf (OCP)_T$, that is, if we impose that $x(0)=x_0$ and $x(T)=x_1$ in the optimal control problem, then $k=2n$ and $R(x,y)=(x-x_0,y-x_1)$. The transversality condition \eqref{condtransv} gives no additional information in that case.
\item If the initial point is fixed in $\bf (OCP)_T$ and the final point is let free, then $k=n$ and $R(x,y)=x-x_0$. The transversality condition \eqref{condtransv} then implies that $\lambda_T(T)=0$.
\item If the initial point is fixed and if the final point is subject to the constraint $g(x_T(T))=0$, with $g=(g^1,\ldots,g^p):\R^n\rightarrow\R^p$, then $k=n+p$ and $R(x,y)=(x-x_0,g(y))$. The transversality condition \eqref{condtransv} then implies that $\lambda(T)$ is a linear combination of the vectors $\nabla g^i(x_T(T))$, $i=1,\ldots,k$.
\item If the periodic condition $x_T(0)=x_T(T)$ is imposed in $\bf (OCP)_T$, then $k=n$ and $R(x,y)=x-y$. In that case, the transversality condition \eqref{condtransv} yields that $\lambda_T(0)=\lambda_T(T)$.
\end{itemize}
\end{remark}

The quadruple $(x_T(\cdot),\lambda_T(\cdot),\lambda^0_T,u_T(\cdot))$ is called an \textit{extremal lift} of the optimal trajectory. The adjoint vector $(\lambda_T(T),\lambda^0_T)$ can be interpreted as a \textit{Lagrange multiplier} of the optimal control problem viewed as a constrained optimization problem (see \cite{Trelatbook}). It is defined up to a multiplicative scalar.
The extremal is said to be \textit{normal} whenever $\lambda^0_T\neq 0$, and in that case the adjoint vector is usually normalized so that $\lambda^0_T=-1$. The extremal is said to be \textit{abnormal} whenever $\lambda^0_T=0$.
Note that every extremal is normal (that is, the Lagrange multiplier associated with the cost is nonzero) if for instance $R(x,y)=x-x_0$ (that is, fixed initial point and free final point).

Throughout the paper, we assume that the optimal solution $(x_T(\cdot),u_T(\cdot))$ of $\bf (OCP)_T$ under consideration has a normal extremal lift $(x_T(\cdot),\lambda_T(\cdot),-1,u_T(\cdot))$.
As it is by now well known, such an assumption is automatically satisfied under generic assumptions (see \cite{CJT,RT}), or under controllability assumptions (see \cite{BettiolFrankowska,Vinter}). 

\paragraph{Static optimal control problem.}
Besides, we consider the \textit{static} optimal control problem
\begin{equation}\label{staticpb}
\min_{\stackrel{(x,u)\in\R^n\times\R^m}{f(x,u)=0}} f^0(x,u).
\end{equation}
This is a usual optimization problem settled in $\R^n\times\R^m$ with a nonlinear equality constraint.
Note that, as it will become clear further, this problem is only related with the dynamical part of the previous $\bf (OCP)_T$ (the terminal conditions do not enter into play here).

We assume that this minimization problem has a solution $(\bar x,\bar u)$.
Note that the minimizer exists and is unique whenever $f$ is linear in $x$ and $u$ for instance, and $f^0$ is a positive definite quadratic form in $(x,u)$.
According to the Lagrange multipliers rule, there exists $(\bar\lambda,\bar\lambda^0)\in\R^n\times\R\setminus\{(0,0)\}$, with $\bar\lambda^0\leq 0$, such that
\begin{equation*}
\begin{split}
f(\bar x,\bar u)&=0,\\
\bar\lambda^0\frac{\partial f^0}{\partial x}(\bar x,\bar \lambda,\bar u) + \Big\langle\bar\lambda,\frac{\partial f}{\partial x}(\bar x,\bar \lambda,\bar u)\Big\rangle &= 0,\\
\bar\lambda^0\frac{\partial f^0}{\partial u}(\bar x,\bar \lambda,\bar u) + \Big\langle\bar\lambda,\frac{\partial f}{\partial u}(\bar x,\bar \lambda,\bar u)\Big\rangle &= 0,
\end{split}
\end{equation*}
or in other words, using the Hamiltonian $H$ defined by \eqref{defH},
\begin{equation}\label{static_extr}
\begin{split}
\frac{\partial H}{\partial \lambda}(\bar x,\bar \lambda,\bar \lambda^0,\bar u)&=0,\\
-\frac{\partial H}{\partial x}(\bar x,\bar \lambda,\bar \lambda^0,\bar u)&=0,\\
\frac{\partial H}{\partial u}(\bar x,\bar \lambda,\bar \lambda^0,\bar u) &= 0 .
\end{split}
\end{equation}
This is the optimality system of the static optimal control problem \eqref{staticpb}.

Throughout the paper we assume that the abnormal case does not occur and hence we normalize the Lagrange multiplier so that $\bar\lambda^0=-1$. As it is well known, Mangasarian-Fromowitz constraint qualification conditions do guarantee normality (see \cite{MangasarianFromowitz}). For example this is true as soon as the set $\{(x,u)\in\R^n\times\R^m\mid f(x,u)=0\}$ is a submanifold, which is a very slight (and generic) assumption.

\paragraph{The turnpike property.}
Since $(\bar x,\bar\lambda,\bar u)$ is an equilibrium point of the extremal equations \eqref{extremalsyst}, it is natural to expect that, under appropriate assumptions (such as controllability assumptions), if $T$ is large then the optimal extremal solution $(x_T(\cdot),\lambda_T(\cdot),u_T(\cdot))$ of the optimal control problem $\bf (OCP)_T$ remains most of the time close to the static extremal point $(\bar x,\bar\lambda,\bar u)$. More precisely, it is expected that if $T$ is large then the extremal is approximately made of three pieces, where:
\begin{itemize}
\item the first piece is a short-time piece, defined on $[0,\tau]$ for some $\tau>0$, along which the extremal $(x_T(\cdot),\lambda_T(\cdot),u_T(\cdot))$ passes approximately from $(x_T(0),\lambda_T(0),u_T(0))$ to $(\bar x,\bar\lambda,\bar u)$;
\item the second piece is approximately stationary, identically equal to the steady-extremal $(\bar x,\bar\lambda,\bar u)$ over the long-time interval $[\tau,T-\tau]$;
\item the third piece is a short-time piece, defined on $[T-\tau,T]$, along which $(x_T(\cdot),\lambda_T(\cdot),u_T(\cdot))$ passes approximately from $(\bar x,\bar\lambda,\bar u)$ to $(x_T(T),\lambda_T(T),u_T(T))$.
\end{itemize}
The first and the third arcs are seen as transient.

At least for the trajectory (but not for the full extremal), this property is known in the existing literature, and in particular in econometry, as the \textit{turnpike property} (see an early result in \cite{Cass1965} for a specific optimal economic growth problem). It stipulates that the solution of an optimal control problem in large time should spend most of its time near a steady-state. In infinite horizon the solution should converge to that steady-state. In econometry such steady-states are known as \textit{Von Neumann points}. The turnpike property means then, in this context, that large time optimal trajectories are expected to converge, in some sense, to Von Neumann points.%
\footnote{As very well reported in \cite{MacKenzie1976}, it seems that the first turnpike result was discovered in \cite[Chapter 12]{DorfmanSamuelsonSolow}, in view of deriving efficient programs of capital accumulation, in the context of a Von Neumann model in which labor is treated as an intermediate product. As quoted by \cite{MacKenzie1976}, in this chapter one can find the following seminal explanation:
\begin{quote}
Thus in this unexpected way, we have found a real normative significance for steady growth -- not steady growth in general, but maximal von Neumann growth. It is, in a sense, the single most effective way for the system to grow, so that if we are planning long-run growth, no matter where we start, and where we desire to end up, it will pay in the intermediate stages to get into a growth phase of this kind. It is exactly like a turnpike paralleled by a network of minor roads. There is a fastest route between any two points; and if the origin and destination are close together and far from the turnpike, the best route may not touch the turnpike. But if origin and destination are far enough apart, it will always pay to get on to the turnpike and cover distance at the best rate of travel, even if this means adding a little mileage at either end. The best intermediate capital configuration is one which will grow most rapidly, even if it is not the desired one, it is temporarily optimal.
\end{quote}
This famous reference has given the name of \textit{turnpike}.}
Several turnpike theorems have been derived in the 60's for discrete-time optimal control problems arising in econometry (see, e.g., \cite{MacKenzie1963}). Continuous versions have been proved in \cite{Haurie} under quite restrictive assumptions on the dynamics motivated by economic growth models. All of them are established for point-to-point optimal control problems and give information on the trajectory only (but not on the adjoint vector).
We also refer the reader to \cite{CarlsonHaurieLeizarowitz_book1991} for an extensive overview of these continuous turnpike results (see also \cite{Zaslavski}).
More recently, turnpike phenomena have been also put in evidence in optimal control problems coming from biology, such as in \cite{CoronGabrielShang}, in relation with singular arcs (see also \cite{Rapaport}).
Note that, in \cite{BoscainPiccoli}, the word "turnpike" refers to the set of points where singular trajectories can stay. In dimension $2$ for the minimal time problem for a control-affine system $\dot x(t)=f_0(x(t))+u(t)f_1(x(t))$, with $u(t)\in[-1,1]$, it is the set of points where $f_1$ is parallel to the Lie bracket $[f_0,f_1]$.

As it is well known, the turnpike properties are due to the saddle point feature of the extremal equations of optimal control (see \cite{Rockafellar1973,Samuelson1972}), and more precisely to the Hamiltonian nature of the extremal equations inferred from the Pontryagin maximum. These results relate the turnpike property with the asymptotic stability properties of the solutions of the Hamiltonian extremal system, coming from the concavity-convexity of the Hamiltonian function.

It is noticeable that, although all these results use extensively this saddle point property, they do not seem aware of finer properties of the Hamiltonian matrix of the extremal system, pointed out in \cite{AndersonKokotovic,WildeKokotovic} and explained further.
In these articles, which, surprisingly enough, seem to have remained widely unacknowledged, the authors prove that the optimal trajectory is approximately made of two solutions for two infinite-horizon optimal control problems, which are pieced together and exhibit a similar transient behavior.
This turnpike property is shown in \cite{WildeKokotovic} for linear quadratic problems under the Kalman condition, extended in \cite{AndersonKokotovic} to nonlinear control-affine systems where the vector fields are assumed to be globally Lipschitz, and being referred to as  the exponential dichotomy property. In both cases the initial and final conditions for the trajectory are prescribed.
Their approach is remarkably simple and points out clearly the hyperbolicity phenomenon which is at the heart of the turnpike results. The use of Riccati-type reductions permits to quantify the saddle point property in a precise way.
Their proofs are however based on a Hamilton-Jacobi approach and, at the end of the article, the open question of extending their results to problems where the Hamilton-Jacobi theory cannot be used (that is, most of the time!) is formulated. Here, in particular, we solve this open question employing the Pontryagin maximum principle that yields a two-point boundary value problem (as we did above).

We provide hereafter a much more general version of a turnpike theorem, valuable without any specific assumption on the dynamics. We stress that we obtain an exponential closedness result to the steady-state, not only for the optimal trajectory in large time, but also for the control and for the associated adjoint vector. The latter property is particularly important in view of the practical implementation of a shooting method, as explained further.

\paragraph{Preliminaries and notations.}
Our analysis will consist of linearizing the extremal equations \eqref{extremalsyst} coming from the Pontryagin maximum principle, at the point $(\bar x,\bar\lambda,-1,\bar u)$ which is the solution of the static optimal control problem. This will be done rigorously and in details further, but let us however explain roughly this step and take the opportunity to introduce several notations useful to state our main result.

Setting $x_T(t)=\bar x+\delta x(t)$, $\lambda_T(t)=\bar\lambda+\delta\lambda(t)$ and $u_T(t)=\bar u+\delta u(t)$ (perturbation variables), we get from the third equation of \eqref{extremalsyst} that, \textit{at the first order} in the perturbation variables $(\delta x,\delta\lambda,\delta u)$, $\delta u(t) = -H_{uu}^{-1} \left( H_{xu} \delta x(t) + H_{\lambda u} \delta\lambda(t) \right)$ (in what follows we will assume that the matrix $H_{uu}$ is invertible),
and then, from the two first equations of \eqref{extremalsyst},
\begin{equation}\label{13:55}
\begin{split}
\delta \dot x(t) &= \left( H_{x\lambda} - H_{u\lambda}H_{uu}^{-1} H_{xu}\right) \delta x(t) - H_{u\lambda} H_{uu}^{-1} H_{\lambda u} \delta\lambda(t) , \\
\delta\dot\lambda(t) & = \left( -H_{xx} + H_{ux} H_{uu}^{-1} H_{xu} \right) \delta x(t) + \left( -H_{\lambda x}+H_{ux}H_{uu}^{-1}H_{\lambda u}\right) \delta\lambda(t)  .
\end{split}
\end{equation}
Here above and in the sequel, we use the following notations. The Hessian of the Hamiltonian $H$ at $(\bar x,\bar\lambda,-1,\bar u)$ is written in blocks as
$$
\mathrm{Hess}_{(\bar x,\bar\lambda,-1,\bar u)}(H) = 
\begin{pmatrix}
H_{xx} & H_{x\lambda} & H_{xu} \\
H_{\lambda x} & 0 & H_{\lambda u} \\
H_{ux} & H_{u\lambda} & H_{uu} 
\end{pmatrix},
$$
where the matrices
$$
H_{xx} = \frac{\partial^2 H}{\partial x^2}(\bar x,\bar\lambda,-1,\bar u),\qquad
H_{x\lambda} = \frac{\partial^2 H}{\partial x\partial\lambda}(\bar x,\bar\lambda,-1,\bar u),
$$
are of size $n\times n$, with $H_{x\lambda}=H_{\lambda x}^*$ (where the upper star stands for the transpose), the matrices 
$$
H_{xu} = \frac{\partial^2 H}{\partial x\partial u}(\bar x,\bar\lambda,-1,\bar u),\qquad
H_{\lambda u} = \frac{\partial^2 H}{\partial\lambda\partial u}(\bar x,\bar\lambda,-1,\bar u),
$$
are of size $n\times m$, with $H_{xu}=H_{ux}^*$ and $H_{\lambda u}=H_{u\lambda}^*$, and the matrix $H_{uu}$ is of size $m\times m$ (it will be assumed to be invertible in the main result hereafter).
Recall that we have set $\lambda^0=-1$ (multiplier associated with the cost) because we have assumed throughout that the abnormal case does not occur in our framework.

We define the matrices
\begin{equation}\label{def_A_B}
A = H_{x\lambda} - H_{u\lambda} H_{uu}^{-1} H_{xu}   , \qquad
B = H_{u\lambda}, \qquad
W = -H_{xx} + H_{ux} H_{uu}^{-1} H_{xu}.
\end{equation}
It can be noted that
$$
H_{x\lambda} = \frac{\partial^2 H}{\partial x \partial\lambda}(\bar x,\bar\lambda,-1,\bar u) = \frac{\partial f}{\partial x}(\bar x,\bar u),
$$
and that
$$
B=H_{u\lambda} = \frac{\partial^2 H}{\partial u \partial\lambda}(\bar x,\bar\lambda,-1,\bar u) = \frac{\partial f}{\partial u}(\bar x,\bar u) .
$$
Note that, setting $Z(t) = ( \delta x(t), \delta\lambda(t))^\top$, the differential system \eqref{13:55} can be written as $\dot Z(t) = M Z(t)$ (at the first order), with the matrix $M$ defined by
\begin{equation*}
M = \begin{pmatrix}
H_{x\lambda} - H_{u\lambda}H_{uu}^{-1} H_{xu}  &  - H_{u\lambda} H_{uu}^{-1} H_{\lambda u}  \\
-H_{xx} + H_{ux} H_{uu}^{-1} H_{xu}  &  -H_{\lambda x}+H_{ux}H_{uu}^{-1}H_{\lambda u}
\end{pmatrix} =
\begin{pmatrix}
A & -B H_{uu}^{-1} B^* \\
W & -A^*
\end{pmatrix}.
\end{equation*}
As explained in details further, the Hamiltonian structure of the matrix $M$ will be of essential importance in our analysis.

Note, here, that the matrices $A$ and $B$ are \textit{not} exactly the matrices of the usual linearized control system at $(\bar x,\bar u)$, which is the system $\delta\dot{x}(t) = H_{x\lambda}\, \delta x(t) + B\, \delta u(t)$. The matrix $A$ defined in \eqref{def_A_B} is rather a deformation of $H_{x\lambda}$ with terms of the second order (note that $H_{xu}=0$ in the usual LQ problem).

\medskip

Our main result is the following.

\begin{theorem}\label{thm1}
Assume that the matrix $H_{uu}$ is symmetric negative definite, that the matrix $W$ is symmetric positive definite, and that the pair $(A,B)$ satisfies the Kalman condition, that is, 
$$\mathrm{rank}(B,AB,\ldots,A^{n-1}B)=n.$$
Assume also that the point $(\bar x,\bar x)$ is not a singular point of the mapping $R$.
Finally, assume either that the norm of the Hessian of $R$ at the point $(\bar x,\bar x)$ is small enough, or that the mapping $R$ is generic.%
\footnote{Here, the genericity is understood in the following sense. Consider the set $\mathcal{X}$ of mappings $R:\R^n\times\R^n\rightarrow\R^k$, endowed with the $C^2$ topology. The generic condition is that $R\in\mathcal{X}\setminus\mathcal{S}$, where $\mathcal{S}$ is a stratified (in the sense of Whitney, see \cite{GM}) submanifold $\mathcal{S}$ of $\mathcal{X}$ of codimension greater than or equal to one.}
Then, there exist constants $\varepsilon>0$, $C_1>0$, $C_2>0$, and a time $T_0>0$ such that, if
\begin{equation}\label{defect}
\bar D = \Vert R(\bar x,\bar x)\Vert +
\left\Vert \begin{pmatrix} -\bar\lambda\\ \bar\lambda\end{pmatrix} - \sum_{i=1}^k \gamma_i \nabla R^i(\bar x,\bar x) \right\Vert \leq \varepsilon 
\end{equation}
then, for every $T>T_0$, the optimal control problem $\bf (OCP)_T$ has at least one optimal solution having a normal extremal lift $(x_T(\cdot),\lambda_T(\cdot),-1,u_T(\cdot))$ satisfying
\begin{equation}\label{estimate_turnpike}
\Vert x_T(t)-\bar x\Vert+\Vert \lambda_T(t)-\bar \lambda\Vert+\Vert u_T(t)-\bar u\Vert \leq C_1 ( e^{-C_2 t}+e^{-C_2 (T-t)}),
\end{equation}
for every $t\in[0,T]$.
\end{theorem}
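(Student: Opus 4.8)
The plan is to reduce the two-point boundary value problem coming from the Pontryagin maximum principle to a perturbation of a hyperbolic linear system and to exploit the Hamiltonian structure of $M$ to get the exponential dichotomy. First I would write the extremal system \eqref{extremalsyst} in the variables $(\delta x,\delta\lambda)=(x_T-\bar x,\lambda_T-\bar\lambda)$, eliminating $\delta u$ through the implicit function theorem applied to the third equation $\partial H/\partial u=0$ (here the invertibility of $H_{uu}$ is used): this gives $\delta u = \phi(\delta x,\delta\lambda)$ with $\phi$ of class $C^1$, $\phi(0,0)=0$, and $D\phi(0,0) = -H_{uu}^{-1}(H_{xu}\ \ H_{\lambda u})$. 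Substituting, the extremal system becomes $\dot Z(t) = MZ(t) + g(Z(t))$ where $Z=(\delta x,\delta\lambda)^\top$, $M$ is the Hamiltonian matrix computed in the preliminaries, and $g$ is $C^1$ with $g(0)=0$, $Dg(0)=0$ (a genuine remainder term, quadratic near the origin). The terminal conditions \eqref{terminalcond} and transversality \eqref{condtransv} translate, after linearization at $(\bar x,\bar x)$, into a boundary condition of the form $\mathcal R(Z(0),Z(T))=0$ which, because $(\bar x,\bar x)$ is not a singular point of $R$, is a submersion; the defect $\bar D$ in \eqref{defect} measures how far the turnpike point $(\bar x,\bar\lambda)$ is from satisfying these conditions exactly.

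The key spectral step is to show that $M$ has no eigenvalue on the imaginary axis, and in fact exactly $n$ eigenvalues with negative real part and $n$ with positive real part, under the hypotheses $H_{uu}<0$, $W>0$ and $(A,B)$ Kalman. This is the classical LQ-dichotomy: $M=\begin{pmatrix} A & -BH_{uu}^{-1}B^*\\ W & -A^*\end{pmatrix}$ with $-BH_{uu}^{-1}B^*\geq 0$ and $W>0$; one shows that the algebraic Riccati equation $A^*E + EA + EBH_{uu}^{-1}B^*E + W = 0$ (careful with signs) has a unique symmetric positive-definite solution $E_+$, and symmetrically one with $E_-<0$, thanks to controllability of $(A,B)$ and positivity of $W$ (which plays the role of the observability weight, here trivially detectable). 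The graphs of $E_+$ and $E_-$ are the stable and unstable invariant subspaces of $M$, and the change of variables $(\delta x,\delta\lambda)\mapsto(\delta x,\delta\lambda - E_\pm\,\delta x)$ puts $M$ in block-triangular "normal form" with a Hurwitz block $A+BH_{uu}^{-1}B^*E_+$ and an anti-Hurwitz block; this is where the "appropriate normal forms of Riccati equations" mentioned in the abstract enter and where the exponential rate $C_2$ is quantified. Call $\mathbb{P}^s,\mathbb{P}^u$ the corresponding spectral projectors and $a>0$ a common exponential rate: $\|e^{tM}\mathbb{P}^s\|\leq Ce^{-at}$ for $t\geq 0$ and $\|e^{tM}\mathbb{P}^u\|\leq Ce^{at}$ for $t\leq 0$.

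With the dichotomy in hand I would set up a fixed-point argument for the nonlinear boundary value problem on $[0,T]$, uniformly in $T$. The natural object is the variation-of-constants representation splitting $Z$ into its stable part propagated forward from $t=0$ and its unstable part propagated backward from $t=T$:
\[
Z(t) = e^{tM}\mathbb{P}^s Z(0) + e^{(t-T)M}\mathbb{P}^u Z(T) + \int_0^t e^{(t-s)M}\mathbb{P}^s g(Z(s))\,ds - \int_t^T e^{(t-s)M}\mathbb{P}^u g(Z(s))\,ds,
\]
together with the $k$ boundary equations $\mathcal R(Z(0),Z(T))=0$. Working in the weighted space of continuous functions with norm $\sup_t e^{a'(\min(t,T-t))}\|Z(t)\|$ for some $0<a'<a$, the Green-kernel estimates give that the integral operator is a contraction with a constant independent of $T$, and the linearized boundary map is onto by the non-singularity of $R$ at $(\bar x,\bar x)$; the hypothesis that $\bar D\leq\varepsilon$ and either the Hessian of $R$ at $(\bar x,\bar x)$ is small or $R$ is generic is exactly what is needed so that the residual in the boundary equations, measured in the weighted norm, is $O(\bar D)$ and the implicit-function/fixed-point scheme closes, producing a solution $Z$ with $\|Z(t)\|\leq C_1(e^{-C_2 t}+e^{-C_2(T-t)})$. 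Finally one must check that the $(x_T,\lambda_T,u_T)$ so obtained is genuinely the extremal lift of an \emph{optimal} solution of $\bf(OCP)_T$ (not merely a critical point): this follows because under $H_{uu}<0$ and the stated convexity the Hamiltonian is strictly concave in $u$ and the problem has an optimal solution by the existence hypothesis, so a solution of the extremal system that is exponentially close to the turnpike corresponds to a true minimizer; adding the estimate $\|\delta u\|\le C\|\phi(\delta x,\delta\lambda)\|\le C'(\|\delta x\|+\|\delta\lambda\|)$ yields \eqref{estimate_turnpike}.

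The main obstacle I anticipate is getting all constants \emph{uniform in $T$}: the Green's function of the hyperbolic problem on $[0,T]$ with split boundary conditions must be bounded independently of $T$, which requires the dichotomy to be genuine (no center directions) and careful bookkeeping of the interaction between the stable flow from $0$ and the unstable flow from $T$ in the middle of the interval — precisely the point where a naive Gronwall estimate would blow up like $e^{aT}$. Handling the general nonlinear boundary condition \eqref{terminalcond}–\eqref{condtransv} (rather than fixed endpoints) is the second delicate point: one must verify that the linearized boundary operator composed with the "solve-the-ODE" map remains a submersion uniformly in $T$, and that the abnormal multipliers do not creep back in — this is where genericity of $R$ or smallness of its Hessian is invoked, to rule out a degenerate alignment between the transversality hyperplane and the unstable subspace of $M$.
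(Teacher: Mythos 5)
Your overall strategy coincides with the paper's: eliminate $\delta u$ by the implicit function theorem using $H_{uu}<0$, reduce to $\dot Z = MZ + \mathrm{o}(Z)$ with the Hamiltonian matrix $M$, split $M$ into Hurwitz and anti-Hurwitz blocks via the minimal/maximal solutions $E_\mp$ of the algebraic Riccati equation, and then show that the resulting two-point boundary value problem is well posed uniformly in $T$. Your weighted-norm fixed-point formulation of the nonlinear step is in fact more careful than the paper's treatment, which works at first order under an a priori smallness assumption verified a posteriori; that part of your plan is sound and would close the loop the paper leaves somewhat informal.

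There is, however, one genuine gap: your claim that ``the linearized boundary map is onto by the non-singularity of $R$ at $(\bar x,\bar x)$'' is not correct as stated, and it is precisely the point where the extra hypothesis (small Hessian of $R$, or genericity of $R$) is indispensable rather than a convenience. The boundary conditions are not just the $k$ equations $R=0$: the transversality conditions \eqref{condtransv} introduce the $k$ unknown multipliers $\gamma_i$, so after linearization one must invert a full $(2n+k)\times(2n+k)$ matrix in the unknowns $(v(0),w(T),\delta\Gamma)$, namely
$$
Q=\begin{pmatrix} R_x & R_y & 0\\ E_-+N_1 & N_2 & R_x^*\\ N_3 & -E_++N_4 & R_y^*\end{pmatrix},
$$
where the $N_i$ are built from the second derivatives of $R$ weighted by the $\bar\gamma_i$. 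Non-singularity of $R$ only gives that $(R_x\ \, R_y)$ has rank $k$; it does not prevent $Q$ from being singular when the $N_i$ are large. The missing computation is the one the paper isolates: when the $N_i$ vanish, solving the system reduces to inverting $-R_xE_-^{-1}R_x^* + R_yE_+^{-1}R_y^*$, which is a sum of two positive semidefinite matrices that is definite exactly because $(R_x\ \, R_y)$ has maximal rank and $E_-<0<E_+$; invertibility of $Q$ then follows either by perturbation (Hessian of $R$ small) or by a genericity argument on the analytic set $\det Q=0$. You flag the ``degenerate alignment'' issue at the end, but without this explicit argument the well-posedness of the shooting problem — and hence the unique determination of $v(0)$ and $w(T)$ that drives the final estimate — is not established. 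Separately, you attribute the role of the Hessian-smallness hypothesis to making the boundary residual $\mathrm{O}(\bar D)$; that is the role of the smallness of $\bar D$ itself, not of the Hessian condition, which serves only the invertibility of $Q$.
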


\begin{remark}\label{rem1}
As follows from the proof of that result, the constant $C_2$ is defined as follows. Let $E_-$ (resp., $E_+$) is the minimal symmetric negative definite matrix (resp., maximal symmetric positive definite) solution of the algebraic Riccati equation
$$
XA + A^* X - X BH_{uu}^{-1} B^* X - W = 0.
$$
Then $C_2$ is the spectral abscissa of the Hurwitz matrix $A-BH_{uu}^{-1}B^*E_-$, that is,
$$
C_2 = - \max \{ \Re(\mu) \mid \mu\in\mathrm{Spec}(A-BH_{uu}^{-1}B^*E_-)  \}   >0 .
$$
The constant $C_1$ depends in a linear way on $\bar D$ and on $e^{-C_2 T}$. In particular, $C_1$ is smaller as $\bar D$ is smaller and/or $T$ is larger.

Note that the existence and uniqueness of $E_-$ and $E_+$ follows from the well-known algebraic Riccati theory (see, e.g., \cite{Abou-Kandil,Kwakernaak,Trelatbook}), using the assumptions that the pair $(A,B)$ satisfies the Kalman condition, that $H_{uu}$ is negative definite, and that $W$ is positive definite. Moreover, under these assumptions the matrix $A-BH_{uu}^{-1}B^*E_-$ is Hurwitz, that is, all its eigenvalues have negative real parts. 
\end{remark}

\begin{remark}
The Kalman condition, which says that the linear system $\dot{X}(t)=AX(t)+BU(t)$ is controllable, is very usual. Note however, as already said, that this linear system is not exactly the linearized system of the nonlinear control system \eqref{contsys} at the point $(\bar x,\bar u)$.
It can be noted that this Kalman controllability assumption, which is used here as one of the sufficient conditions ensuring the turnpike property, is used only to ensure the existence and uniqueness of the minimal and maximal solutions of the Riccati equation, sharing the desired spectral assumptions.
\end{remark}

\begin{remark}\label{rem2}
In the linear quadratic case (that is, with an autonomous linear system and a quadratic cost; in that case the matrices $A$, $B$ defined by \eqref{def_A_B} coincide indeed with the matrices defining the system), the result of Theorem \ref{thm1} holds true globally, that is, $\varepsilon=+\infty$. We provide all details on the LQ case in Section \ref{sec_LQ}.
\end{remark}

\begin{remark}
The assumption that the symmetric matrix $H_{uu}=\frac{\partial^2 H}{\partial u^2}(\bar x,\bar\lambda,-1,\bar u)$ be negative definite is standard in optimal control, and is usually referred to as a \textit{strong Legendre condition} (see, e.g., \cite{AgSachkov,BCT,BFT}). It implies that the implicit equation $\frac{\partial H}{\partial u}(x,\lambda,-1,u)=0$ can be solved in $u$ in a neighborhood of $(\bar x,\bar\lambda,-1,\bar u)$, by an implicit function argument.
This assumption is satisfied for instance whenever the system is control-affine and the function $f^0$ in the cost functional is a positive definite quadratic form in $(x,u)$ (see Section \ref{sec_controlaffine} for more details).
For more general nonlinear systems the strong Legendre condition is generally assumed along a given extremal in order to ensure its local (in space and time) optimality property (see, e.g., \cite{BCT}). 
\end{remark}

\begin{remark}
The assumption that the symmetric matrix $W$ be positive definite is (to our knowledge) not standard in optimal control. 
It is commented in Section \ref{sec2} through classes of examples.
In the LQ case however this assumption is natural and automatically satisfied (see Section \ref{sec_LQ}).
\end{remark}

\begin{remark}
The assumptions on the terminal conditions, represented by the mapping $R$, are generic ones. For instance these assumptions are automatically satisfied if the terminal conditions are linear, or are almost linear (which means that the norm of the Hessian of $R$ is small). As will be proved in Lemma \ref{lemQ}, if the set $R=0$ is a differential submanifold of $\R^n\times\R^n$ whose curvature at the point $(\bar x,\bar x)$ is too large, then there is a risk that in our proof some matrix be not invertible (more precisely, the matrix $Q$ defined by \eqref{defQ}), which would imply the ill-posedness of the shooting problem coming from the Pontryagin maximum principle. We prove that such a condition is however very exceptional (non-generic).
\end{remark}

\begin{remark}
The assumption \eqref{defect} that $\bar D$ be small enough means that $(\bar x,\bar\lambda)$ is \textit{almost} a solution of \eqref{terminalcond} and of \eqref{condtransv}, in the sense that
$$
R(\bar x,\bar x)\simeq 0
$$
and
$$
\begin{pmatrix} -\bar\lambda\\ \bar\lambda\end{pmatrix} \simeq \sum_{i=1}^k \gamma_i \nabla R^i(\bar x,\bar x) .
$$ 
In order  to facilitate the understanding, let us provide hereafter several typical examples of terminal conditions, following those of Remark \ref{remBC}.
\begin{itemize}
\item If the initial and final points are fixed in $\bf (OCP)_T$, then the smallness condition is satisfied as soon as the initial point $x_0$ and the final point $x_1$ are close enough to $\bar x$.

\item If the initial point is fixed in $\bf (OCP)_T$ and the final point is let free, then the smallness condition is satisfied as soon as the initial point $x_0$ is close enough to $\bar x$ and the Lagrange multiplier $\bar\lambda$ has a small enough norm. This additional requirement that $\Vert\bar\lambda\Vert$ be small enough is satisfied as soon as $(\bar x,\bar u)$ is `almost" a local or a global minimizer of the problem of minimizing $f^0$ over the whole set $\R^n\times\R^m$ (that is, without the constraint $f=0$). For instance if $f(\bar x,\bar u)=0$ and if $f^0$ is nonnegative with $f^0(\bar x,\bar u)=0$ then $\bar\lambda=0$ and then the condition is obviously satisfied.

\item If we impose that $x(0)=x(T)$ (periodicity assumption) in $\bf (OCP)_T$, then $\bar D=0$ and hence the smallness condition is always satisfied without any further requirement.
\end{itemize}
Our terminal conditions are far more general and cover a very large number of situations, whose interpretation is let to the reader.
\end{remark}

\begin{remark}
It follows from \eqref{estimate_turnpike} that, under the conditions of Theorem \ref{thm1}, we have
$$
\lim_{T\rightarrow +\infty} \frac{1}{T}\int_0^T x_T(t) \, dt = \bar x,\quad
\lim_{T\rightarrow +\infty} \frac{1}{T}\int_0^T \lambda_T(t) \, dt = \bar \lambda,\quad
\lim_{T\rightarrow +\infty} \frac{1}{T}\int_0^T u_T(t) \, dt = \bar u,
$$
and
$$
\lim_{T\rightarrow +\infty} \frac{C_T(u_T)}{T}  = f^0(\bar x,\bar u).
$$
We thus recover in particular results from \cite{PorrettaZuazua}. The latter equality says that the time-asymptotic average over the optimal values of $\bf (OCP)_T$ coincides with the optimal value of the static optimal control problem.
\end{remark}

\begin{remark}
As explained in \cite{AndersonKokotovic,WildeKokotovic}, in the case where the initial and final points are fixed in the optimal control problem, that is, $R(x,y)=(x-x_0,y-x_1)$, the optimal trajectory and control solutions of $\bf (OCP)_T$ can be approximately obtained by piecing together the solutions of two infinite-time regulator problems: the first one consists of steering asymptotically in time the initial point $x_0$ to the point $\bar x$ (stabilization problem in forward time), and the second one can be seen as a reverse in time problem, consisting of steering $x_1$ to $\bar x$ in infinite time (stabilization problem in reverse time). These initial and final phases are transient and exponentially quick, and in the long mid-interval the trajectory stays exponentially close to $\bar x$.
\end{remark}

As it is well known, the turnpike property is actually due to a general hyperbolicity phenomenon. Roughly speaking, in the neighborhood of a saddle point, any trajectory of a given hyperbolic dynamical system, which is constrained to remain in this neighborhood in large time, will spend most of the time near the saddle point. This very simple observation is at the heart of the turnpike results. Actually, when linearizing the extremal equations derived from the Pontryagin maximum principle at the steady-state $(\bar x,\bar \lambda, \bar u)$ solution of the static optimal control problem \eqref{staticpb}, we get a hyperbolic system. In other words, this steady-state, analogue of a Von Neumann point in econometry, is a saddle point for the extremal system \eqref{extremalsyst}.
In the present paper we will use as well this remark, instrumentally combined with precise estimates on Riccati equations inspired from \cite{WildeKokotovic} in order to tackle general terminal conditions.

Our analysis will consist of analyzing shrewdly the behavior of the solutions of \eqref{13:55}, written in the form of $\dot Z(t) = M Z(t)$ (at the first order).
In the proof of Theorem \ref{thm1} (which is done in Section \ref{sec3}), we will use in an instrumental way the fact that $M$ is a Hamiltonian matrix, but with a however specific feature: it is purely \textit{hyperbolic}. This will be proved thanks to fine (but classical) properties of Riccati equations.
In order to highlight the main ideas and in particular the central hyperbolicity phenomenon, we will first prove the theorem in the LQ case (see Section \ref{sec3.1}), with very simple terminal conditions. The proof of the general nonlinear case with general terminal conditions is done in Section \ref{sec3.2}, and is more technical due to two reasons: the first is that we have to be careful with the remainder terms, and the second is due to the generality of the terminal conditions under consideration. Note that it is also required, in the general case, to prove that the corresponding shooting problem is well posed, which is far from obvious (see lemmas \ref{lembeforeQ} and \ref{lemQ}).

Before coming to the proof of Theorem \ref{thm1}, in the next section we provide examples and applications of our main result.

\section{Examples and applications}\label{sec2}
In Section \ref{sec_LQ}, we focus on the particular but important case of linear quadratic problems. We explain in detail how Theorem \ref{thm1} can be stated more precisely in that case. In Section \ref{sec_controlaffine}, we focus on another important class of optimal control problems, settled with control-affine systems (linear in the control, nonlinear in the state). We also provide numerical illustrations.
In Section \ref{sec2.3}, we briefly recall what are the numerical methods that are usually implemented in order to solve numerically an optimal control problem, and recall their usual limitations in terms of initialization. In the framework of our turnpike result, we provide a new appropriate way of initializing successfully a direct or an indirect method in optimal control. In particular, we design an adequate variant of the classical shooting method.
Finally, in Section \ref{sec2.4} we provide further comments and describe some of the many open problems that arise from our study.

\subsection{The linear quadratic case}\label{sec_LQ}
In this section we assume that
$$ 
f(x,u)=Ax+Bu,
$$
with $A$ a matrix of size $n\times n$ and $B$ a matrix of size $n\times m$, and that
$$
f^0(x,u) = \frac{1}{2}(x-x^d)^*Q(x-x^d) + \frac{1}{2}(u-u^d)^*U(u-u^d),
$$
where $Q$ is a $n\times n$ symmetric positive definite matrix and $U$ is a $m\times m$ symmetric positive definite matrix, and where $x^d\in\R^n$ and $u^d\in\R^m$ are arbitrary. The matrices $Q$ and $U$ are weight matrices.
It is assumed that the pair $(A,B)$ satisfies the Kalman condition.

We consider the following terminal conditions. Let $x_0\in \R^n$ and $x_1\in\R^n$ be arbitrary.
We consider either the terminal constraints $x(0)=x_0$ and $x(T)=x_1$ (that is, initial and final points fixed), or $x(0)=x_0$ and $x(T)$ free (that is, only the initial point is fixed).

Note that, in this LQ case, one has $H_{xu}=0$ and $H_{xx}=-Q$ and hence the matrices $A$, $B$ defined by \eqref{def_A_B} coincide indeed with the above matrices defining the system. Moreover, $H_{uu}=-U$ is symmetric negative definite and $W=Q$ is symmetric positive definite by definition.

Besides, it is clear that $\bf (OCP)_T$ has a unique solution $(x_T(\cdot),u_T(\cdot))$, having a normal extremal lift $(x_T(\cdot),\lambda_T(\cdot),-1,u_T(\cdot))$ (note that the Kalman condition implies that the extremal lift is normal), and the control has the simple expression
$$ u_T(t) = U^{-1}B^*\lambda_T(t).$$
The extremal system \eqref{extremalsyst} is written as
\begin{equation}\label{extremalsystLQ}
\begin{split}
\dot{x}_T(t) &= Ax_T(t)+BB^*\lambda_T(t)+Bu^d, \qquad x_T(0)=x_0,  \\
\dot{\lambda}_T(t) &= Qx_T(t)-A^*\lambda_T(t)-Qx^d,
\end{split}
\end{equation}
for almost every $t\in[0,T]$.
In the case where the final point $x_T(T)$ is free then we have the transversality condition $\lambda_T(T)=0$.

\medskip

The static optimal control problem \eqref{staticpb} is written, in that case, as the (strictly convex) minimization problem
\begin{equation}\label{staticpbLQ}
\min_{\stackrel{(x,u)\in\R^n\times\R^m}{Ax+Bu=0}} \frac{1}{2}\left( (x-x^d)^*Q(x-x^d) + (u-u^d)^*U(u-u^d) \right).
\end{equation}
It has a unique solution $(\bar x,\bar u)$, associated with a normal Lagrange multiplier $(\bar\lambda,-1)$.
Note that the optimization problem \eqref{staticpbLQ} is indeed qualified as soon as $\mathrm{null}(A^*)\cap\mathrm{null}(B^*)=\{0\}$, condition which is implied by (and is weaker than) the Kalman condition. Therefore the abnormal case does not occur here.
The system \eqref{static_extr}, coming from the Lagrange multipliers rule, says here that there exists $\bar\lambda\in\R^n\setminus\{0\}$ such that $\bar u = u^d + U^{-1}B^*\bar\lambda$ and
\begin{equation}\label{extrLQ}
\begin{split}
A\bar x + BB^*\bar\lambda + Bu^d &= 0,\\
Q\bar x - A^*\bar\lambda - Q x^d &= 0.
\end{split}
\end{equation}

As mentioned in Remark \ref{rem2}, the result of Theorem \ref{thm1} holds true globally.
In this LQ framework, Theorem \ref{thm1} takes the following form.

\begin{theorem}\label{thm1LQ}
There exist constants $C_1>0$ and $C_2>0$ such that for every time $T>0$ the optimal control problem $\bf (OCP)_T$ has a unique solution $(x_T(\cdot),\lambda_T(\cdot),u_T(\cdot))$, which satisfies
\begin{equation}
\Vert x_T(t)-\bar x\Vert+\Vert \lambda_T(t)-\bar \lambda\Vert+\Vert u_T(t)-\bar u\Vert \leq C_1 ( e^{-C_2 t}+e^{-C_2 (T-t)}),
\end{equation}
for every $t\in[0,T]$.
\end{theorem}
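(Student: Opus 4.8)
The plan is to run, in this quadratic and hence remainder-free setting, the hyperbolicity argument that will underlie the proof of Theorem \ref{thm1}: it is in essence the dichotomy of \cite{WildeKokotovic}, recast through the Pontryagin two-point boundary value problem instead of Hamilton--Jacobi theory. First I would subtract the static optimality system \eqref{extrLQ} from the extremal system \eqref{extremalsystLQ}; setting $\delta x=x_T-\bar x$, $\delta\lambda=\lambda_T-\bar\lambda$ and $Z=(\delta x,\delta\lambda)^{\top}$, one obtains \emph{exactly} (no first-order truncation, the dynamics being linear and the cost quadratic) the homogeneous system $\dot Z(t)=MZ(t)$, where $M$ is the Hamiltonian matrix of the Introduction specialized to $H_{uu}=-U$ and $W=Q$, while $\partial_u H=0$ gives $u_T(t)-\bar u=U^{-1}B^*\delta\lambda(t)$. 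Strict convexity of $u\mapsto C_T(u)$ on the affine constraint set (since $Q,U>0$) already gives uniqueness of the optimal pair, with a normal extremal lift by the Kalman condition; it then remains to build $\lambda_T(\cdot)$ and to prove the decay estimate.

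Next I would exploit the Hamiltonian structure of $M$, which makes $\mathrm{Spec}(M)$ symmetric about the imaginary axis, together with the algebraic Riccati theory recalled in Remark \ref{rem1} (which uses precisely the hypotheses Kalman$(A,B)$, $H_{uu}<0$, $W>0$). This provides the minimal negative definite solution $E_-$ and the maximal positive definite solution $E_+$ of $XA+A^*X-XBH_{uu}^{-1}B^*X-W=0$, with $N_-:=A-BH_{uu}^{-1}B^*E_-$ Hurwitz of spectral abscissa $-C_2<0$ and, by the symmetry of the spectrum, $N_+:=A-BH_{uu}^{-1}B^*E_+$ anti-Hurwitz with $-N_+$ Hurwitz of spectral abscissa $-C_2$. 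The graphs $V_\pm=\{(v,E_\pm v):v\in\R^n\}$ are then the stable and unstable invariant subspaces of $M$, and $E_--E_+<0$ is invertible, so $\R^{2n}=V_-\oplus V_+$; hence every solution of $\dot Z=MZ$ can be written, with $a,b\in\R^n$ and $\Phi_\pm(s)=e^{N_\pm s}$,
\[
\delta x(t)=\Phi_-(t)a+\Phi_+(t-T)b,\qquad \delta\lambda(t)=E_-\Phi_-(t)a+E_+\Phi_+(t-T)b,
\]
with the boundary-layer bounds $\|\Phi_-(t)\|\le Ce^{-C_2t}$ for $t\ge0$ and $\|\Phi_+(t-T)\|\le Ce^{-C_2(T-t)}$ for $0\le t\le T$ (any polynomial prefactor due to Jordan blocks being absorbed into $C$).

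Then I would impose the terminal conditions. Requiring $x_T(0)=x_0$ together with either $x_T(T)=x_1$ or the transversality condition $\lambda_T(T)=0$ becomes a $2n\times2n$ linear system $N_T(a,b)^{\top}=d_T$, where $\|d_T\|\le\|x_0-\bar x\|+\|x_1-\bar x\|$ (resp. $\le\|x_0-\bar x\|+\|\bar\lambda\|$) and
\[
N_T=\begin{pmatrix} I & \Phi_+(-T)\\ \Phi_-(T) & I\end{pmatrix}\quad\text{or}\quad N_T=\begin{pmatrix} I & \Phi_+(-T)\\ E_-\Phi_-(T) & E_+\end{pmatrix}.
\]
The off-diagonal (boundary-layer coupling) blocks are $O(e^{-C_2T})$, so $N_T$ tends as $T\to+\infty$ to an invertible block-diagonal matrix; moreover for every $T>0$ it is invertible, because positive definiteness of $Q$ and $U$ makes the second variation of $C_T$ positive definite, so \eqref{extremalsystLQ} with those terminal conditions has no conjugate points and is uniquely solvable. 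Hence $\|N_T^{-1}\|$ is bounded for $T$ bounded below and $\|a\|+\|b\|\le K\|d_T\|$; plugging this into the two formulas above, using the boundary-layer bounds and $\|u_T(t)-\bar u\|=\|U^{-1}B^*\delta\lambda(t)\|\le\|U^{-1}B^*\|\,\|\delta\lambda(t)\|$, yields \eqref{estimate_turnpike} with $C_2$ the spectral abscissa of $N_-$ (as in Remark \ref{rem1}) and $C_1$ linear in the terminal data, the bounded range of $T$ being incorporated into $C_1$.

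The hard part is the combination of the hyperbolicity step with the uniform-in-$T$ invertibility: one must see that the Hamiltonian matrix $M$ has no spectrum on the imaginary axis and describe its stable/unstable splitting explicitly through $E_\pm$, and then exploit the exponential smallness $O(e^{-C_2T})$ of the coupling blocks of $N_T$ to bound $\|N_T^{-1}\|$ uniformly for large $T$. In the general nonlinear Theorem \ref{thm1} both points become substantially more delicate: the linearization is only approximate, so the remainder terms must be controlled along the whole trajectory, and the analogue of $N_T$ must be shown invertible for the general terminal map $R$ --- which is exactly where the genericity (or small Hessian) assumption on $R$ comes in.
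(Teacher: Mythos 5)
Your proposal follows essentially the same route as the paper's own proof of Theorem \ref{thm1LQ}: exact reduction to the homogeneous Hamiltonian system $\dot Z=MZ$, block-diagonalization of $M$ via the extremal Riccati solutions $E_\pm$ (the paper's Lemma \ref{lem_Ham} and the change of variables $Z=PZ_1$), the resulting exponential dichotomy, and resolution of the two-point boundary conditions as an exponentially small perturbation of an invertible block system determining $v(0)=a$ and $w(T)=b$. The only cosmetic differences are that you package the boundary conditions as a single matrix $N_T$ inverted by a Neumann-series argument (and add a conjugate-point remark for small $T$), where the paper solves the same two equations by direct substitution; the substance is identical.
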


\begin{remark}\label{rem1_1}
To be more precise with the constants, what we establish is that
\begin{equation*}
\begin{split}
\Vert x_T(t)-\bar x\Vert &\leq \Vert x_0-\bar x \Vert e^{-C_2 t} + \Vert E_-^{-1}\bar\lambda\Vert e^{-C_2 (T-t)}   \\
&\qquad + \mathrm{O}\left( \Vert E_-^{-1}\bar\lambda \Vert e^{-C_2 (t+T)} + \Vert E_-^{-1}E_+\Vert \Vert x_0-\bar x\Vert e^{-C_2 (2T-t)} \right) , \\
\Vert \lambda_T(t)-\bar\lambda\Vert &\leq \Vert E_+\Vert \Vert x_0-\bar x \Vert e^{-C_2 t} +\Vert E_-\Vert  \Vert E_-^{-1}\bar\lambda\Vert e^{-C_2 (T-t)} \\
&\qquad + \mathrm{O}\left( \Vert E_+\Vert \Vert E_-^{-1}\bar\lambda \Vert e^{-C_2 (t+T)} + \Vert E_-\Vert  \Vert E_-^{-1}E_+\Vert \Vert x_0-\bar x\Vert e^{-C_2 (2T-t)}  \right)  , \\
\Vert u_T(t)-\bar u\Vert &\leq \Vert U^{-1}\Vert\Vert B\Vert  \Vert \lambda_T(t)-\bar\lambda\Vert ,
\end{split}
\end{equation*}
for every $t\in[0,T]$, where $E_-$ (resp., $E_+$) is the minimal symmetric negative definite matrix (resp., maximal symmetric positive definite) solution of the algebraic Riccati equation\footnote{Note that their existence and uniqueness follows from the well-known algebraic Riccati theory (see, e.g., \cite{Abou-Kandil,Kwakernaak,Trelatbook}), since the pair $(A,B)$ satisfies the Kalman condition, and $U$ and $Q$ are positive definite. Moreover the matrix $A+BU^{-1}B^*E_-$ is Hurwitz, that is, all its eigenvalues have negative real parts.}
$$
XA + A^* X + X BU^{-1} B^* X - Q = 0,
$$
and where $C_2$ is the spectral abscissa of the Hurwitz matrix $A+BU^{-1}B^*E_-$, that is,
$$
C_2 = - \max \{ \Re(\mu) \mid \mu\in\mathrm{Spec}(A+BU^{-1}B^*E_-)  \}   >0 .
$$
Here, the remainder terms $\mathrm{O}(\cdot)$ are to be understood with respect to $T$ large.
\end{remark}

\begin{remark}
Let us comment on the pair of points $(x^d,u^d)$, which have been arbitrarily fixed at the beginning. 

First of all, let us consider the particular case where $(x^d,u^d)$ is an equilibrium point, that is, $Ax^d+Bu^d=0$.
In that case, $(\bar x,\bar u)=(x^d,u^d)$ is the solution of the static optimal control problem \eqref{staticpb}, and $\bf (OCP)_T$ is a usual linear-quadratic problem. It is very well known that when $T=+\infty$ then the (unique) solution of $\bf (OCP)_{\infty}$ is given by the algebraic Riccati theory: the optimal control is $u_{\infty}(t) = u^d + U^{-1}B^*E_- x_{\infty}(t)$, where $E_-$ is defined as in Remark \ref{rem1_1}, and $x_{\infty}(t)$ converges exponentially to $x^d$ as $t$ tends to $+\infty$.

The turnpike property says here that the optimal trajectory is approximately made of three pieces, the first of which consists of passing exponentially quickly from $x_0$ to $x^d$, then of staying most of the time at the steady-state $x^d$, and the last piece consists of passing exponentially quickly from $x_d$ to $x_1$.
We thus recover exactly the result of \cite{WildeKokotovic,AndersonKokotovic}.

Note that, in \cite{PorrettaZuazua}, the final point is let free. In that case the transversality condition at the final time gives $\lambda_T(T)=0$, and in the turnpike structure described above there is no third piece anymore as soon as $(x^d,u^d)$ is an equilibrium point.

\medskip

Secondly, let us now assume that $(x^d,u^d)$ is not an equilibrium point. Then we are not anymore within the framework of \cite{WildeKokotovic,AndersonKokotovic}. When $T$ tends to $+\infty$, the optimal solution $(x_T(\cdot),u_T(\cdot))$ does not converge towards $(x^d,u^d)$ (which is not an equilibrium). What the result says is that the optimal extremal $(x_T(\cdot),\lambda_T(\cdot),u_T(\cdot))$ spends most of its time close to $(\bar x,\bar\lambda,\bar u)$, where $(\bar x,\bar u)$ is the nearest (for the norms induced by $Q$ and $U$) equilibrium point to $(x^d,u^d)$. We recover here the result of \cite{PorrettaZuazua}.

Note that $C_T(u_T)$ tends to $+\infty$ as $T$ tends to $+\infty$, as soon as $(x^d,u^d)$ is not an equilibrium point. Actually, one has
$$
\lim_{T\rightarrow +\infty} \frac{C_T(u_T)}{T} = \frac{1}{2} \left( (\bar x-x^d)^*Q(\bar x-x^d) + (\bar u-u^d)^*U(\bar u-u^d) \right).
$$
\end{remark}

\begin{example}\label{example1}
Let us provide a simple example in order to illustrate the turnpike phenomenon in the LQ case. Consider the two-dimensional control system
\begin{equation*}
\begin{split}
\dot{x}_1(t) &= x_2(t), \\
\dot{x}_2(t) &= -x_1(t)+ u(t) ,
\end{split}
\end{equation*}
with fixed initial point $(x_1(0),x_2(0))=(0,0)$, and the problem of minimizing the cost functional
$$
\frac{1}{2} \int_0^T \left( \left(x_1(t)-2\right)^2+\left(x_2(t)-7\right)^2+u(t)^2 \right) dt .
$$
The final point is let free.
An easy computation shows that the optimal solution of the static problem is given by $\bar x = (1,0)$, $\bar u=1$, and $\bar \lambda=(-7,1)$.

We compute the optimal solution $(x_1(\cdot),x_2(\cdot),\lambda_1(\cdot),\lambda_2(\cdot),u(\cdot))$ in time $T=30$, by using a direct method of optimal control (see \cite{Trelatbook,TrelatJOTA}). More precisely we discretize the above optimal control problem by using a simple explicit Euler method with $1000$ time steps, and we use the optimization routine \texttt{IPOPT} (see \cite{IPOPT}) combined with the automatic differentiation code \texttt{AMPL} (see \cite{AMPL}) on a standard desktop machine. The result is drawn on Figure \ref{figLQ}.
\begin{figure}[h]
\centerline{\includegraphics[width=14cm]{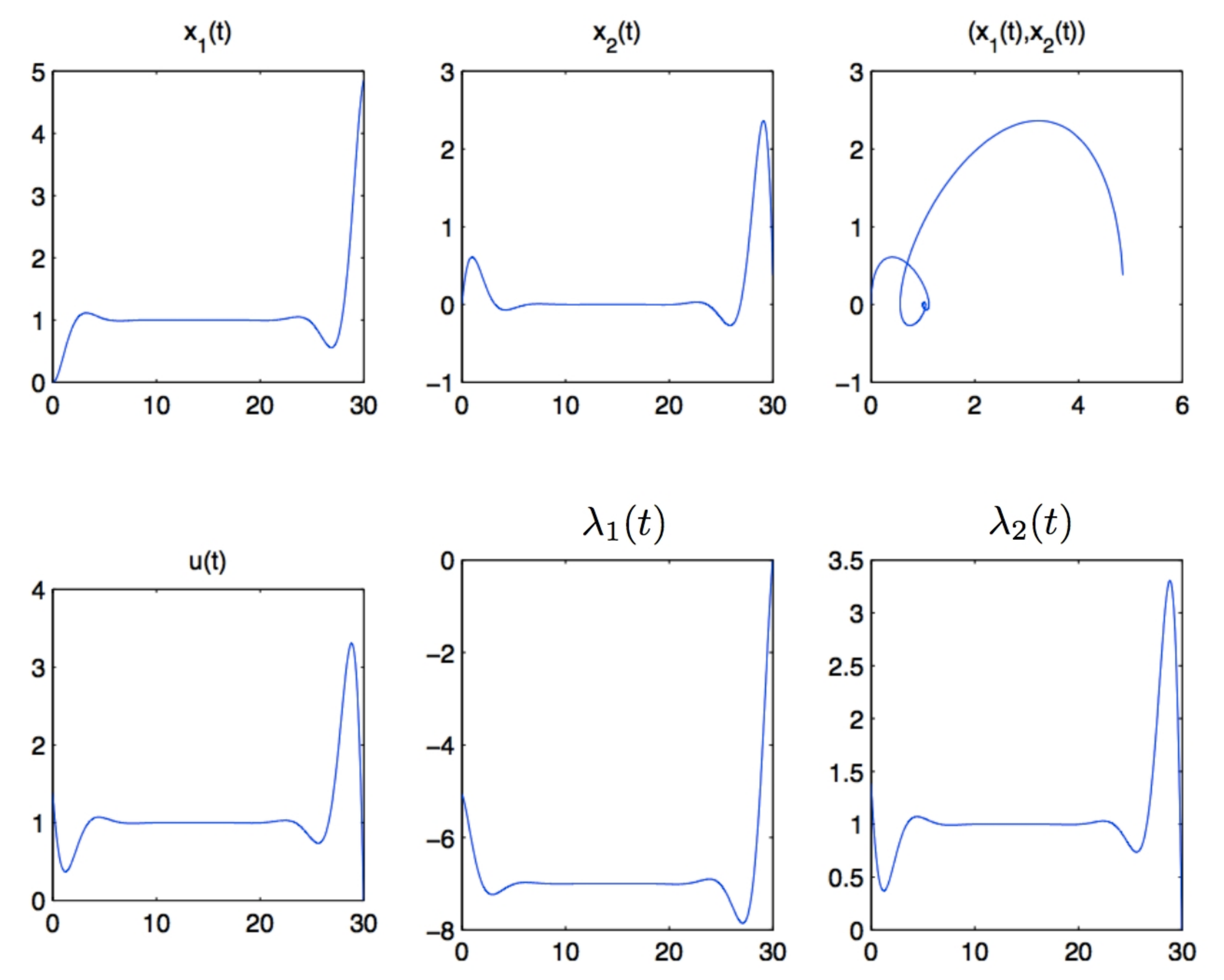}}
\caption{Example in the LQ case.}\label{figLQ}
\end{figure}
Note that, since the final point is free, the transversality condition yields $\bar \lambda_T(T)=0$. Besides, the maximization condition of the Pontryagin maximum principle implies that $u(t)=\lambda_2(t)$.

The turnpike property can be observed on Figure \ref{figLQ}. As expected, except transient initial and final arcs, the extremal $(x_1(\cdot),x_2(\cdot),\lambda_1(\cdot),\lambda_2(\cdot),u(\cdot))$ remains close to the steady-state $(1,0,1,-7,1)$.

It can be noted that, along the interval of time $[0,30]$, the curves $x_1(\cdot)$, $x_2(\cdot)$, $\lambda_1(\cdot)$, $\lambda_2(\cdot)$ and $u(\cdot)$ oscillate around their steady-state value (with an exponential damping). This oscillation is visible on Figure \ref{figLQoscill}, where one can see the successive (exponentially small) loops that $(x_1(\cdot),x_2(\cdot))$ makes around the point $(1,0)$. The number of loops tends to $+\infty$ as the final time $T$ tends to $+\infty$.
\begin{figure}[H]
\centerline{\includegraphics[width=8cm]{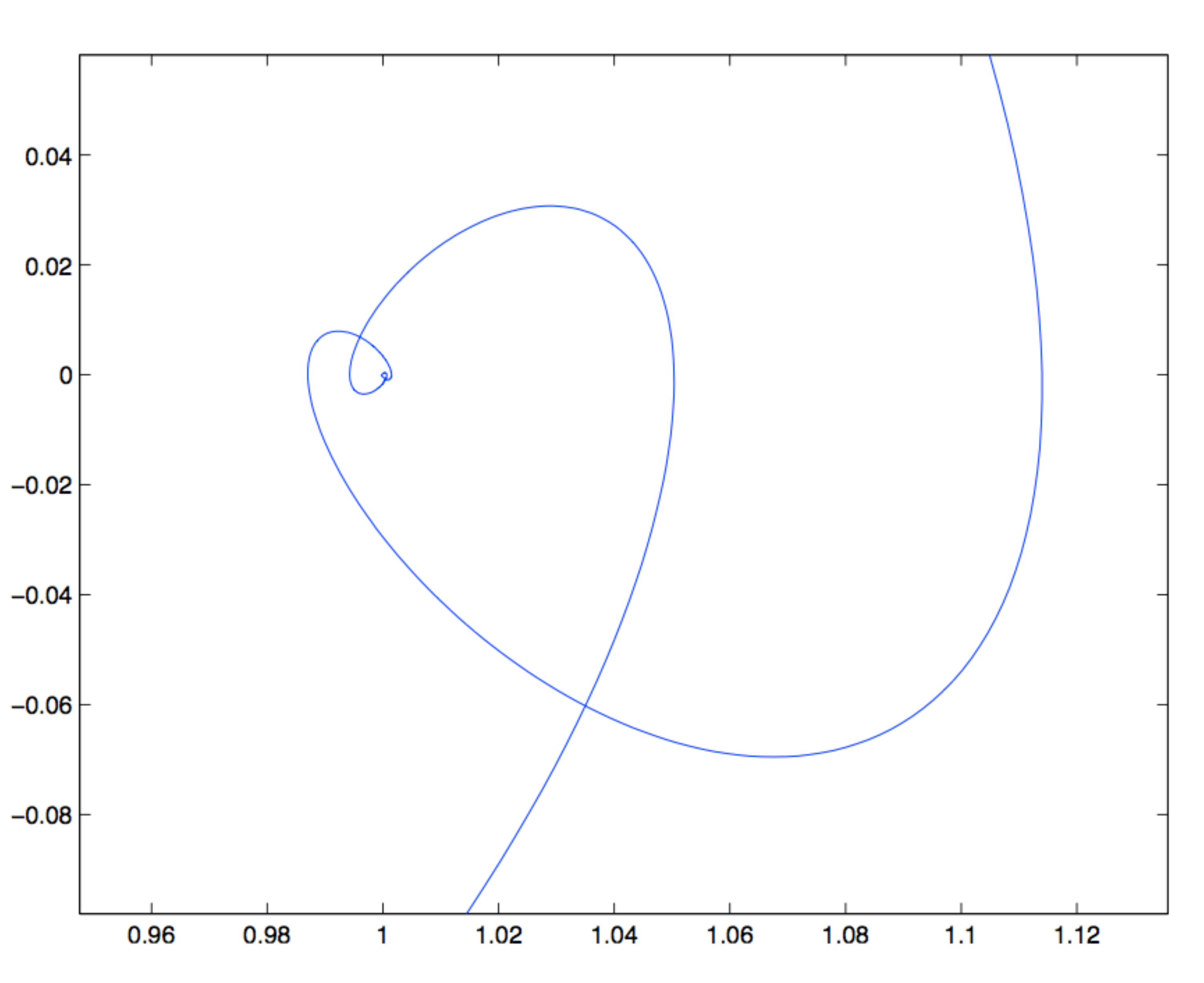}}
\caption{Oscillation of $(x_1(\cdot),x_2(\cdot))$ around the steady-state $(1,0)$.}\label{figLQoscill}
\end{figure}
\end{example}

\subsection{Control-affine systems with quadratic cost}\label{sec_controlaffine}
In this section, we consider the class of control-affine systems with quadratic cost, that is,
$$
f(x,u) = f_0(x) + \sum_{i=1}^m u_i f_i(x),
$$
where $f_i$ is a $C^2$ vector field in $\R^n$, for every $i\in\{0,\ldots,m\}$, and 
$$
f^0(x,u) = \frac{1}{2} (x-x^d)^*Q(x-x^d) + \frac{1}{2} (u-u^d)^*U(u-u^d),
$$
with $Q$ a $n\times n$ symmetric positive definite matrix, and $U$ a $m\times m$ symmetric positive definite matrix. The matrices $Q$ and $U$ are weight matrices, as in the LQ case.
In this framework, we have
$$
H = \langle\lambda,f_0(x)\rangle + \sum_{i=1}^m u_i\langle\lambda,f_i(x)\rangle - \frac{1}{2} (x-x^d)^*Q(x-x^d) - \frac{1}{2} (u-u^d)^*U(u-u^d),
$$
$$
H_{xu} = \begin{pmatrix}
\langle\bar\lambda,df_1(\bar x)\rangle, \ldots, \langle\bar\lambda,df_m(\bar x)\rangle,
\end{pmatrix}
$$
$$
H_{xx} = - Q + \langle\bar\lambda,d^2f_0(\bar x)\rangle + \sum_{i=1}^m u_i \langle\bar\lambda,d^2f_i(\bar x)\rangle,
$$
$$
H_{uu} = -U,
$$
and hence
$$
W = -H_{xx}+H_{ux}H_{uu}^{-1}H_{xu}
= Q - H_{xu}^*U^{-1}H_{xu} - \langle\bar\lambda,d^2f_0(\bar x)\rangle - \sum_{i=1}^m u_i \langle\bar\lambda,d^2f_i(\bar x)\rangle .
$$
Intuitively, the requirement that $W>0$ says that the positive weight represented by $Q$ has to be large enough in order to compensate possible distortion by the vector fields. Note that if the controlled vector fields $f_i$, $i=1,\ldots,m$, are linear in $x$, then $W=Q$ and hence the assumption on $W$ is automatically satisfied. This is the case whenever the dynamics have the form $f(x,u)=A_0 x+\sum_{i=1}^m u_i A_i x$.

\begin{example}\label{example2}
Let us provide a simple example in order to illustrate the turnpike phenomenon for a control-affine system with a quadratic cost. Consider the optimal control problem of steering the two-dimensional control system
\begin{equation*}
\begin{split}
\dot{x}_1(t) &= x_2(t), \\
\dot{x}_2(t) &= 1-x_1(t)+x_2(t)^3 + u(t) ,
\end{split}
\end{equation*}
from the initial point $(x_1(0),x_2(0))=(1,1)$ to the final point $(x_1(T),x_2(T))=(3,0)$, by minimizing the cost functional
$$
\frac{1}{2} \int_0^T \left( \left(x_1(t)-\frac{1}{2}\right)^2+\left(x_2(t)-\frac{1}{2}\right)^2+\left(u(t)-1\right)^2 \right) dt .
$$
This is a nonlinear harmonic oscillator with an explosive cubic term.
An easy computation shows that the optimal solution of the static problem is given by $\bar x = (\frac{5}{4},0)$, $\bar u=\frac{1}{4}$, and $\bar \lambda=(-\frac{1}{2},-\frac{3}{4})$.

As in the example \ref{example1}, we compute the optimal solution $(x_1(\cdot),x_2(\cdot),\lambda_1(\cdot),\lambda_2(\cdot),u(\cdot))$ in time $T=20$, by using a direct method.
The result is drawn on Figure \ref{figaffine}.
\begin{figure}[h]
\centerline{\includegraphics[width=14cm]{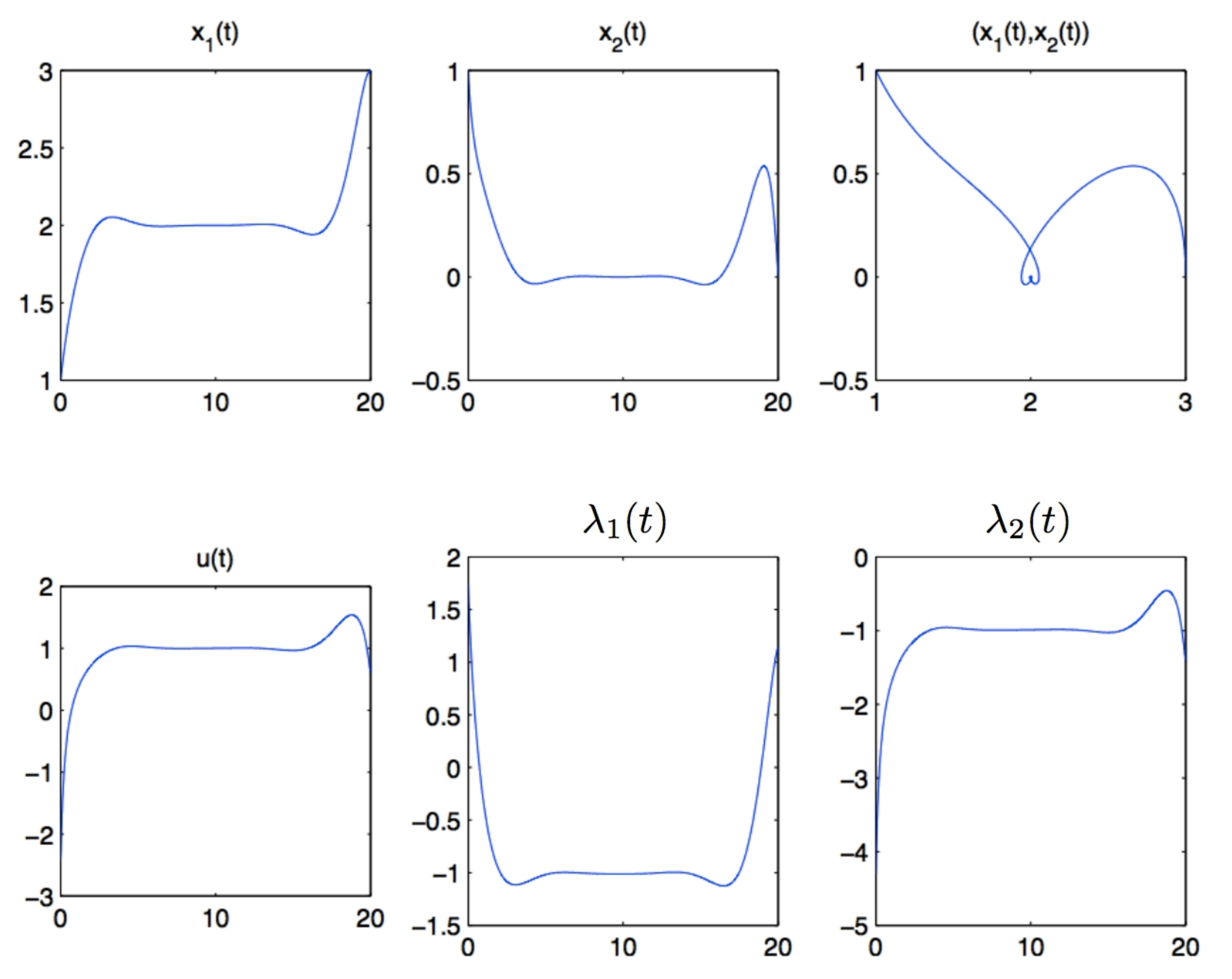}}
\caption{Example in the control-affine case.}\label{figaffine}
\end{figure}
Note that, according to the maximization condition of the Pontryagin maximum principle, we have $u(t)=2+\lambda_2(t)$.
The turnpike property can be observed on Figure \ref{figaffine}. As expected, except transient initial and final arcs, the extremal $(x_1(\cdot),x_2(\cdot),\lambda_1(\cdot),\lambda_2(\cdot),u(\cdot))$ remains close to the steady-state $(2,0,-1,-1,1)$.

Note that we have the same (exponentially damped) oscillation phenomenon as in the example \ref{example1} around the steady-state.
This oscillation can be seen on Figure \ref{figaffineoscill}, in the form of successive (exponentially small) heart-shaped loops that $(x_1(\cdot),x_2(\cdot))$ makes around the point $(2,0)$. The number of loops tends to $+\infty$ as the final time $T$ tends to $+\infty$.

It can be noted that, due to the explosive term $x_2^3$, the convergence of the above optimization problem may be difficult to ensure. However, as we will explain in Section \ref{sec2.3}, we use here the particularly adequate initialization given by the solution of the static problem. Then the convergence is easily obtained. The convergence of an optimization solver with any other initialization would certainly not be ensured.

\begin{figure}[H]
\centerline{\includegraphics[width=8cm]{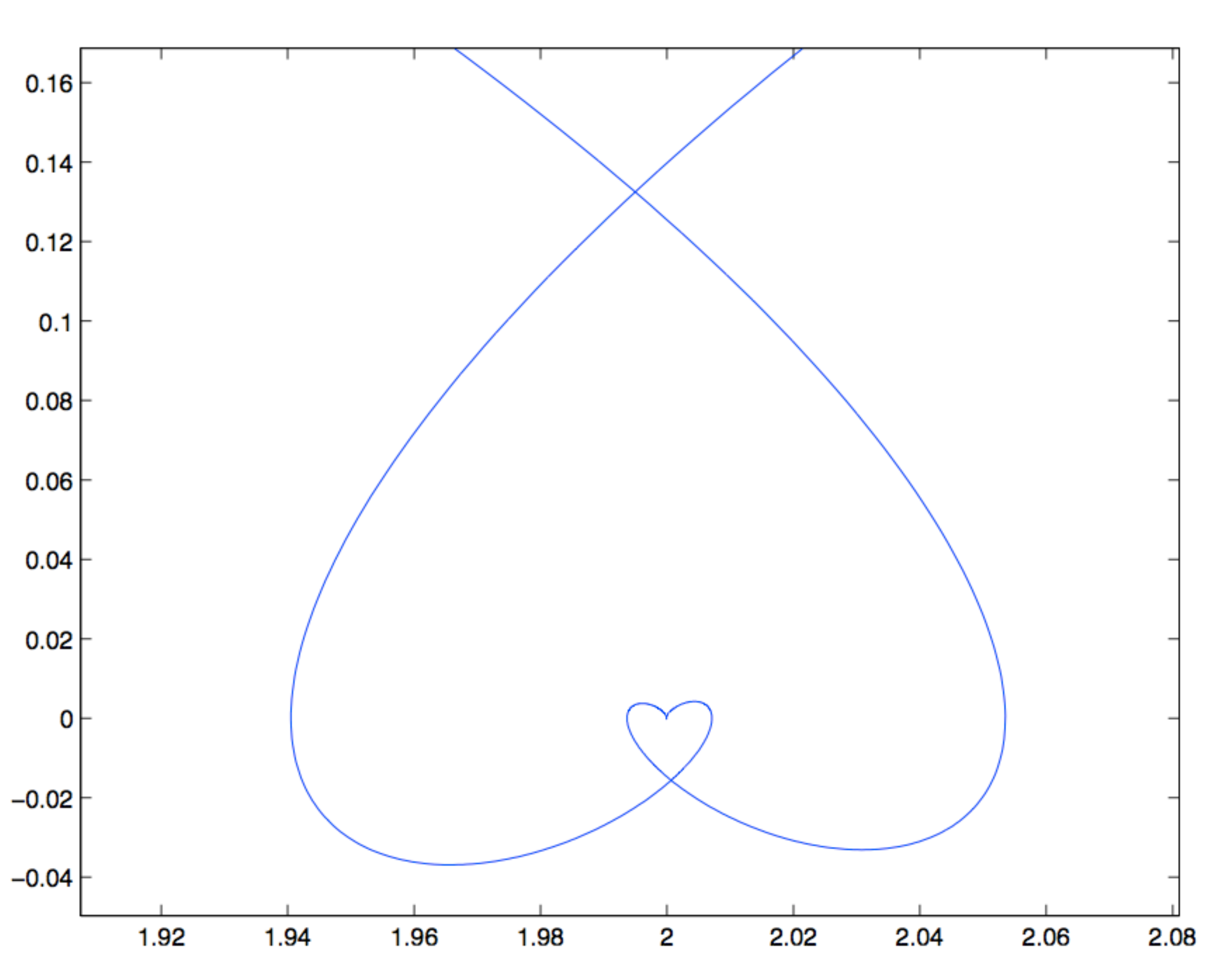}}
\caption{Oscillation of $(x_1(\cdot),x_2(\cdot))$ around the steady-state $(2,0)$.}\label{figaffineoscill}
\end{figure}
\end{example}

\subsection{Turnpike and numerical methods in optimal control}\label{sec2.3}
Let us first recall that there are mainly two kinds of numerical approaches in optimal control: direct and indirect methods. Roughly speaking, direct methods consist of discretizing the state and the control so as to reduce the problem to a constrained nonlinear optimization problem. Indirect methods consist of solving numerically the boundary value problem derived from the application of the Pontryagin maximum principle (shooting method).

Both methods suffer from a difficulty of initialization, the question being: how to initialize adequately the unknowns of the problem, in order to make converge successfully the numerical method?

Here, in the context of our turnpike theorem, we provide a new and natural way to ensure a successful initialization, for both direct and indirect approaches.

\subsubsection{Direct methods}
Direct methods consist of discretizing both the state and the control. After discretizing, the optimal control problem is reduced to a nonlinear optimization problem in finite dimension, or nonlinear programming problem, of the form
\begin{equation}\label{pboptim}
 \min_{Z\in C} F(Z), 
\end{equation}
where $Z=(x_1,\ldots,x_N,u_1,\ldots,u_n)$, and
\begin{equation}\label{defC}
C = \{ Z\ \vert\  g_i(Z)=0,\ i\in{1,\ldots,r},\ g_j(Z)\leq 0,\ j\in{r+1,\ldots,m} \} .
\end{equation}
There exists an infinite number of variants, depending on the choice of finite-dimensional representations of the control and of the state, of the discretization of the extremal differential equations, and of the discretization of the cost functional. We refer to \cite{Betts} for a thorough description of many direct approaches in optimal control.

Then, to solve the optimization problem \eqref{pboptim} under the constraints \eqref{defC}, there is also a large number of possible methods. We refer the reader to any good textbook of numerical optimization.

It can be noted that, in the previous examples \ref{example1} and \ref{example2}, we have used such a direct approach, and used the sophisticated interior-point optimization routine \texttt{IPOPT} combined with automatic differentiation (modeling language \texttt{AMPL}).

In any case, whatever method one can use, the immediate difficulty one is faced with is the problem of initializing the unknowns of the problem.
We propose here the following very natural idea. Assume that we are dealing with an optimal control problem like $\bf (OCP)_T$, where the final time is quite large. Assume that we are in the conditions of Theorem \ref{thm1}.
Then the optimal trajectory enjoys the turnpike property, and as proved in Theorem \ref{thm1}, the whole extremal is close to a certain stationary value which can be computed by solving the static optimal control problem \eqref{staticpb}.
This information can actually be used in an instrumental way in order to initialize successfully a numerical direct method to solve $\bf (OCP)_T$, by providing a high-quality initial guess which is then expected to make  the numerical method converge, at least if the final time $T$ is large enough.

This is exactly what we have observed in the examples \ref{example1} and \ref{example2}, where the direct method that we have implemented was converging very easily and efficiently with that appropriate initialization. In the example \ref{example2}, due to the explosive term $x_2^3$, the interest of this adequate initialization is particularly evident.

\subsubsection{Shooting method}
Let us first recall the principle of the usual shooting method.
Assume that the Pontryagin maximum principle has been applied, that there is no abnormal extremal, and that the extremal controls have been expressed, using the maximization condition, in function of $(x,p)$. Then the extremal system \eqref{extremalsyst} is reduced to a differential system of the form
\begin{equation}\label{sysF}
\dot{z}(t) = F(z(t)),
\end{equation}
with $z=(x,p)$, and the terminal conditions \eqref{terminalcond}, combined with the corresponding transversality conditions \eqref{condtransv}, can be written as
\begin{equation}\label{BCF}
G(z(0),z(T))=0.
\end{equation}
In the usual implementation of the shooting method (see, e.g., \cite{StoerBulirsch,Trelatbook,TrelatJOTA}), the unknown $z(0)$ is searched such that the solution of \eqref{sysF}, starting at $z(0)$ at time $t=0$, satisfies \eqref{BCF}.

It can be noted that we have only $n$ unknowns. Indeed in $z(0)\in\R^{2n}$, a part of dimension $n$ is already fixed. To be clear, the most usual case is when the initial and final states are fixed in the optimal control problem under consideration. In that case, $x(0)$ is already known and in $z(0)$ the unknowns are the $n$ last coordinates, that are the initial adjoint vector $p(0)$. In the shooting method, these $n$ unknowns must be tuned so that the relation $x(T)=x_1$ holds true.

The implementation is usually done using a Newton method, or some variant of it. The shooting method is then nothing else but the combination of a Newton method with a numerical method for integrating an ordinary differential equation.

As it is well known, the shooting method is in general very hard to initialize, due to the fact that the domain of convergence of the Newton method underneath is small. In order to guarantee the convergence of the shooting method, one is then required to provide an adequate initialization of $z(0)$, precise enough so that the Newton method will converge. This task may be very hard unless one does not have a rough idea of the value of $z(0)$.
Shooting methods are in general more sensitive to the initialization than direct methods.
Many remedies do exist however, that can be used for classes of problems in such or such situation (see, e.g., the survey \cite{TrelatJOTA}).

We propose here the following remedy. Assume, as before, that we are in the context of our turnpike result (Theorem \ref{thm1}).
Then not only the trajectory and the control but also the adjoint vector are close to the steady-state solution of the static optimal control problem \eqref{staticpb}.

This closedness cannot a priori be used directly to ensure the convergence of the shooting method described above, if it is implemented in the usual way. Indeed, in the context of our turnpike result, the extremal is approximately known along the interval $[\varepsilon,T-\varepsilon]$, for some $\varepsilon>0$, but it is not known at the terminal points $t=0$ and $t=T$.

The natural idea is then to modify the usual implementation of the shooting method, and to initialize it at some arbitrary point of $[\varepsilon,T-\varepsilon]$, for instance, at $t=T/2$.
The method is then the following.

\paragraph{Variant of the shooting method.}
The unknown is $z(T/2)\in\R^{2n}$. It will be naturally initialized at $(\bar x,\bar p)$, the steady-state solution of the static optimal control problem \eqref{staticpb}. Then:
\begin{itemize}
\item we integrate backwards the system \eqref{sysF}, over $[0,T/2]$, and we get a value of $z(0)$;
\item we integrate forward the system \eqref{sysF}, over $[T/2,T]$, and we get a value of $z(T)$.
\end{itemize}
Then the unknown of $z(T/2)$ must be tuned (through a Newton method) so that \eqref{BCF} is satisfied.

This very simple variant of the usual shooting method appears to be very efficient, at least when one is in the context of a turnpike.

For the optimal control problem studied in the example \ref{example2}, it is interesting to observe that this approach works perfectly and is very much stable, whereas it is extremely difficult to ensure the convergence of the usual shooting method, already for $T=2$. Actually, using very refined continuation processes as in \cite{TrelatJOTA}, we were able to make it converge for $T=10$, but the method becomes so much sensitive that it is impossible to go beyond (once again, due to the explosive term $x_2^3$ it becomes impossible to find a good initial guess in the classical shooting method whenever $T$ becomes too large).

\begin{remark}
It can be noted that this variant of the shooting method is similar to some methods used for computing traveling waves solutions of constant speed of nonlinear reaction-diffusion equations, or more generally heteroclinic orbits of infinite-dimensional dynamical systems (see, e.g., \cite{DoedelFriedman_JCAM1989,LentiniKeller_SINUM1980}). There, the turnpike is understood by the passage (phase transition) close to an equilibrium point from the stable to the unstable manifold.
\end{remark}

\subsection{Further comments and open problems}\label{sec2.4}
The turnpike property established in this article ensures that, for  general finite-dimensional optimal control problems settled in large time, the optimal control and trajectories are, most of the time, exponentially close to the optimal control and state of the corresponding steady-state (or static) optimal control problem, provided the time-horizon is large enough.

It can be noticed that, in the present article we have investigated the behavior of the solutions only near one steady-state. What can happen globally whenever there are several steady-state solutions of the static problem is related with the global dynamics and can be challenging to analyze. It is very interesting to mention the works \cite{Rapaport,Rapaport2} in which the authors characterize the optimality of several turnpikes that are in competition, for a specific class of optimal control problems. This requires a fine knowledge of the global properties of the dynamics underneath, in particular the homoclinic and heteroclinic connections and how steady-state controls can act on them.

In practice the turnpike property allows performing a significant simplification on the analysis and  computation of time-dependent optimal controls and trajectories. Namely, in view of this result, one can simply consider the steady-state problem, dropping the time dependence, and take the corresponding steady-state optimal control and state as an approximation of the time evolution ones. According to the results of this paper, we know that such an approximation is legitimate, during most of the time-horizon, except for two exponential boundary layers at the initial and final times, provided the time-horizon for the control problem is large enough. Of course, in practice, it is a very interesting issue to develop methods and principles allowing one to determine whether or not, for a time-dependent optimal control problem, the time-horizon is large enough so that the turnpike property applies. The methods developed in this paper provide estimates that can be made explicit on specific examples, yielding some safety bounds.

This principle of replacing the time-dependent optimal control problem by the steady-state one is often used in practical applications without actually proving rigorously the turnpike property. This is for instance typically the case in optimal shape design problems in \textit{Continuum Mechanics}. Indeed, both in elasticity (see \cite{Allaire}) and aeronautics (see \cite{jameson,pironneau}), most often, optimal shape designs or optimal materials are determined on the basis of a steady-state modeling. Justifying this reduction in the context of nonlinear PDE's is a very difficult and mainly open subject.  Practitioners often focus on the development of efficient numerical algorithms, combining continuous and discrete optimization techniques, Hadamard shape derivatives, topological derivatives and level set methods, homogenization theory, etc. But very little is known about the rigorous actual proximity of the time-dependent optimal shapes or materials and the steady-state ones.

Let us however comment on some of the existing literature in this important subject.

In \cite{Balas}, the author studies the problem of adjusting the steady-state shape of a large antenna near a desired profile, by means of optimal control. The antenna is modeled as a second-order in time distributed parameter system. The author shows the convergence of quasi-static optimal controllers designed from a finite-dimensional approximation towards optimal controllers of the infinite-dimensional optimal quasi-static control problem. There is however no investigation of how close the time-dependent optimal shapes (which are expected to evolve slowly in time, in a quasi-static way) are from the designed steady-state shapes.

Recently, in the context of the identification of optimal materials for heat processes, in \cite{AM} it was proved that, for large-time optimization horizons, such processes can be approximated by the optimal steady-state ones. Note however that, in the analysis in \cite{AM}, the materials (modeled by the coefficients of the second-order operator generating the parabolic dynamics) were chosen to be time-independent. Thus, this is not, strictly speaking, a turnpike result but rather a  $\Gamma$-convergence one, ensuring the convergence of optimizers from parabolic towards elliptic. 

Similar results were proved in \cite{PZ2} in the context of the control of the semilinear heat equation. In that paper it is shown that, while proving the $\Gamma$-convergence of time-independent controls of the heat equation towards the elliptic one can be carried out in a standard manner, as a consequence of the exponential convergence of parabolic trajectories towards elliptic solutions as time tends to infinity, the turnpike property is much harder to achieve. In fact the results in \cite{PZ2} about turnpike require smallness conditions on the steady-states and controls under consideration that could well be of a purely technical nature.

Note that the results of the present paper, established in a finite-dimensional setting, are based on a careful and subtle analysis of the hyperbolicity structure of the Hamiltonian system associated to the optimality system characterizing the optimal states and controls for the time-evolution problem. The extension of this analysis to the infinite-dimensional setting is a challenging open problem as it is probably a necessary step for a better understanding of the turnpike property for nonlinear PDE's and to avoid the possibly technical smallness assumptions in  \cite{PZ2}.

The corresponding linear PDE theory was developed in \cite{PorrettaZuazua}. There it was emphasized how and why the turnpike property requires the controllability of the system to be fulfilled, something which is often ignored in applications, where the turnpike property is assumed to hold as a simple consequence of the stability of the forced dynamics towards the steady-state one in large time. It would be interesting to analyze, from a qualitative point of view, to which extent such a principle holds in practice, i.e., to which extent the control problems inherit the turnpike property out of more classical stability properties of the dynamical systems in large time. The analysis in \cite{PorrettaZuazua} and also in the present paper use in a key manner the controllability properties of the underlying dynamics. 

The idea of approximating large-time dependent control problems by steady-state formulations has also been used in order to derive controllability results for difficult unstable PDE control problems (see \cite{CoronTrelat_SICON2004} for semilinear explosive heat equations, \cite{CoronTrelat_CCM2006} for semilinear explosive wave equations, \cite{SchmidtTrelat} for Couette flows with Navier-Stokes equations). This idea is also related  to \textit{adiabatic transformations} or to\textit{quasi-static deformations} (note that adiabatic controls were implemented in \cite{Boscain} for a quantum control problem).

The notion of adiabatic process comes from thermodynamics, where the models used are stationary because the phase transitions can be considered as instantaneous. Similar considerations are done in many other domains. For instance in ferromagnetic materials the phase transitions of the magnetization vector are very quick, so that a good knowledge on the system can be acquired from a static description (called micromagnetics) of the materials (see, e.g., \cite{LabbePrivatTrelat}). This is also often the case in fluid mechanics where, at least in the absence of (unsteady) turbulence, the models considered are often steady or laminar flows.

In the present paper we have also presented a number of numerical simulations that exhibit how  the turnpike property clearly emerges. This raises the interesting issue of the actual convergence of the numerical approximations performed, both by direct and by shooting methods. A closely related issue would be that of the turnpike property for the discrete versions of the continuous dynamical systems under consideration and also the possible convergence and proximity of the turnpike trajectories and controls as the time-step of the discretization tends to zero. The turnpike property has been investigated for discrete finite-dimensional dynamical systems (see \cite{CarlsonHaurieLeizarowitz_book1991,Grunediscrete}) but, as far as we know, the limit process as the mesh-size tends to zero has not been analyzed in its whole generality. When doing this, necessarily, several parameters, $T$ and the mesh-size, in particular, may interact in various manners depending on how fast $T$ tends to infinity, while the mesh-size parameter tends to zero and vice-versa. One could expect the hyperbolic structure of the linearized optimality system exhibited in this paper to be quite robust. This could allow  transferring the turnpike property from the continuous ODEs to numerical schemes, in a general framework. Note however that, in view of the fact that we are dealing with long time intervals, very likely, the numerical schemes employed will need to fulfill the property of absolute stability so that the asymptotic qualitative properties of the ODE are preserved. Finally, let us recall that, at the PDE level, the numerical approximation of control problems  is well known to be a very sensitive issue, in particular for systems governed by hyperbolic PDEs, in which spurious numerical high frequencies oscillations may destroy the controllability properties of the continuous model (see \cite{ZSIREV}).

\medskip

Let us conclude by formulating more precisely the turnpike problem in the context of  finite-dimensional optimal design. Consider the system
\begin{equation*}
\dot{x}(t) = A(t) x(t)+b.
\end{equation*}
The equation under consideration is affine, the applied force $b$ being given and time-independent. 
The control problem itself is of bilinear nature since the control is assumed to take place in the time-dependent coefficients of $A(t)$. To fix ideas, we can assume that the matrices $A(t)$, for  $0 \leq t \leq T$, belong to a class ${\cal C}$ of symmetric definite positive matrices, with eigenvalues between two lower and upper bounds, $0 < \alpha_- < \alpha_+ < \infty$. We may then consider a simple minimization criterion
\begin{equation*}
C_T(u) = \int_0^T \left( \Vert x(t) -x^d\Vert^2 + \Vert A(t)\Vert^2 \right) dt
\end{equation*}
where the target $x^d$ is given as well.

A similar problem can be formulated in the steady-state regime where the state equation is simply 
\begin{equation*}
A x+b=0
\end{equation*}
and the functional to be minimized is
\begin{equation*}
\Vert x -x^d\Vert^2 + \Vert A\Vert^2,
\end{equation*}
within the same class ${\cal C}$ of matrices $A$.

The question then concerns whether the optimal time-dependent coefficients of $A_T(t)$,  the optimal matrix $A_T$ in the time interval $[0, T]$, approximate the those of the optimal steady state one $A^*$,  as the time-horizon $T$ is large enough.

Similar questions can be formulated in the PDE setting. We emphasize that the analog of the case considered in \cite{AM} in the present finite-dimensional setting, would correspond to the situation where  the admissible matrices are time-independent. The problem is open in that parabolic setting when coefficients are allowed to depend both in space and time.

Note also that classical problems of optimal shape design for PDE's can be formulated in a similar setting since, most often, using shape deformations, the analysis is limited to considering classes of admissible elliptic operators on a given reference shape. Of course, also at the level of shape optimization, a huge difference arises depending on whether one considers time-dependent or time-independent shapes.

\medskip

\section{Proof of Theorem \ref{thm1}}\label{sec3}
\subsection{Proof in the linear quadratic case}\label{sec3.1}
Since the proof in the general case is quite technical, in order to facilitate the understanding and highlight the idea of the hyperbolicity phenomenon, we first prove the theorem in the linear quadratic case, that is, we prove Theorem \ref{thm1LQ}.
Although the framework is more particular than in Theorem \ref{thm1}, this proof has the advantage of highlighting the main idea underlying the turnpike property, which relies on a simple hyperbolicity property.

\medskip

First of all, note that the equations \eqref{extrLQ} yield the linear system
\begin{equation}\label{eqxlambda}
\begin{pmatrix}
A & BU^{-1}B^* \\ Q & -A^*
\end{pmatrix}
\begin{pmatrix}
\bar x \\ \bar\lambda
\end{pmatrix}
=
\begin{pmatrix}
-Bu^d \\ Qx^d
\end{pmatrix} .
\end{equation}
In what follows we set
\begin{equation}\label{defMLQ}
M = \begin{pmatrix}
A & BU^{-1}B^* \\ Q & -A^*
\end{pmatrix} .
\end{equation}

\begin{lemma}\label{lemuniq}
Assume that
\begin{equation}\label{hypnull}
\mathrm{null}(A^*)\cap\mathrm{null}(B^*)=\{0\}.
\end{equation}
Then the matrix $M$ is invertible and therefore the equation \eqref{eqxlambda} has a unique solution.
\end{lemma}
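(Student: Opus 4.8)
The plan is to prove invertibility of $M$ by showing directly that its kernel is trivial, using only that $Q$ and $U$ are symmetric positive definite together with the hypothesis \eqref{hypnull}. Suppose $(x,\lambda)^\top\in\R^n\times\R^n$ satisfies $M(x,\lambda)^\top=0$, that is,
\begin{equation*}
Ax + BU^{-1}B^*\lambda = 0, \qquad Qx - A^*\lambda = 0 .
\end{equation*}
Since $Q$ is symmetric positive definite it is invertible, so the second equation gives $x = Q^{-1}A^*\lambda$. Substituting into the first equation yields $AQ^{-1}A^*\lambda + BU^{-1}B^*\lambda = 0$.

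Next I would take the inner product of this identity with $\lambda$, obtaining
\begin{equation*}
\langle Q^{-1}A^*\lambda, A^*\lambda\rangle + \langle U^{-1}B^*\lambda, B^*\lambda\rangle = 0 .
\end{equation*}
Both terms are nonnegative because $Q^{-1}$ and $U^{-1}$ are symmetric positive definite, hence each of them vanishes, and by positive definiteness this forces $A^*\lambda=0$ and $B^*\lambda=0$. The assumption \eqref{hypnull}, $\mathrm{null}(A^*)\cap\mathrm{null}(B^*)=\{0\}$, then gives $\lambda=0$, and consequently $x=Q^{-1}A^*\lambda=0$. Therefore $\ker M=\{0\}$, so the $2n\times 2n$ matrix $M$ is invertible, and the linear system \eqref{eqxlambda} has a unique solution $(\bar x,\bar\lambda)$.

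I do not expect any real obstacle in this argument; the only points requiring mild care are the sign bookkeeping (in this version of $M$ the block $+BU^{-1}B^*$ appears with a plus sign, consistently with the LQ extremal system \eqref{extremalsystLQ}, so the two quadratic forms above indeed add up rather than subtract) and the observation that \eqref{hypnull} is strictly weaker than the Kalman condition, so the lemma applies under the standing hypotheses of Section \ref{sec_LQ}. An alternative, slightly more conceptual route would be to invoke strict convexity of the static problem \eqref{staticpbLQ} together with the constraint qualification $\mathrm{null}(A^*)\cap\mathrm{null}(B^*)=\{0\}$ to deduce existence and uniqueness of the minimizer and of its Lagrange multiplier, which amounts to the same conclusion; but the elementary kernel computation above is the most self-contained and is the one I would present.
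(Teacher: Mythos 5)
Your argument is correct and is essentially identical to the paper's own proof: both take an element of the kernel of $M$, eliminate $x$ via $x=Q^{-1}A^*\lambda$, and pair the resulting identity $(AQ^{-1}A^*+BU^{-1}B^*)\lambda=0$ with $\lambda$ to conclude $A^*\lambda=B^*\lambda=0$, whence $\lambda=0$ by \eqref{hypnull}. No differences worth noting.
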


\begin{proof}
Take $\begin{pmatrix}x\\ y\end{pmatrix}$ in the nullspace of $M$. Then $Ax+BU^{-1}B^*y=0$ and $Qx-A^*y=0$, whence $(AQ^{-1}A^*+BU^{-1}B^*)y=0$, and therefore $\Vert Q^{-1/2}A^*y\Vert^2+\Vert U^{-1/2}B^*y\Vert^2=0$. The conclusion follows.
\end{proof}

\begin{remark}
If the pair $(A,B)$ satisfies the Kalman condition (which is well known to be a necessary and sufficient condition for the controllability of the linear system $\dot{x}=Ax+Bu$)
then the assumption \eqref{hypnull} is satisfied. The assumption \eqref{hypnull} is weaker than the Kalman condition.
\end{remark}

\begin{remark}
Actually it is easy to see that $\mathrm{rank}(M) = n + \mathrm{rank}(AQ^{-1/2}A^*+BU^{-1/2}B^*)$.
\end{remark}

According to Lemma \ref{lemuniq}, under assumption \eqref{hypnull} (which is implied by the Kalman condition) the static optimal control problem \eqref{staticpbLQ}, whose minimizer is characterized by \eqref{eqxlambda}, has a unique solution $(\bar x,\bar u,\bar\lambda)$.
Setting
$$
\delta x(t) = x_T(t)-\bar x,\quad \delta\lambda(t)=\lambda_T(t)-\bar\lambda,
$$
we get from \eqref{extremalsystLQ} and \eqref{extrLQ}
\begin{equation}\label{eqdelta}
\begin{split}
\delta\dot{x}(t) &= A \delta x(t) + B U^{-1}B^* \delta\lambda(t),  \\
\delta\dot{\lambda}(t) &= Q\delta x(t)-A^* \delta\lambda(t), 
\end{split}
\end{equation}
with $\delta x(0)=x_0-\bar x$ and $\delta x(T)=x_1-\bar x$ (the latter equality being replaced with $\delta\lambda(T)=-\bar\lambda$ in the case where the final point is free).
This is a \textit{shooting problem} (\textit{two-point boundary value problem}) for the linear differential system
\begin{equation}\label{ODEZ}
\dot{Z}(t) = M Z(t) ,
\end{equation}
with
$$
Z(t) = \begin{pmatrix}
\delta x(t)\\ \delta\lambda(t)
\end{pmatrix},
$$
for which a part of the initial data and a part of the final data are imposed, and which consists of determining what is the right initial condition $\delta\lambda(0)$ such that the solution $Z(\cdot)=(\delta x(\cdot),\delta\lambda(\cdot))$ of the differential system \eqref{ODEZ}, starting at 
$$Z(0)=\begin{pmatrix} x_0-\bar x \\ \delta\lambda(0) \end{pmatrix} ,$$
satisfies at the final time the condition $\delta x(T)=x_1-\bar x$ (or $\delta\lambda(T)=-\bar\lambda$ if the final point is free).

The matrix $M$  enjoys the following crucial property, which is at the heart of the proof of the turnpike property.

\begin{lemma}\label{lem_Ham}
The matrix $M$ is Hamiltonian\footnote{This fact in itself implies that there exists a symplectic change of coordinates such that, in the new system, the matrix $M$ consists of blocks either of the form $\begin{pmatrix}
\mu & 0 \\ 0 & -\mu \end{pmatrix}$ with $\mu\in\R$, or $\begin{pmatrix}
0 & \beta \\ -\beta & 0 \end{pmatrix}$ with $\beta\in\R$, or $\begin{pmatrix} S & 0 \\ 0 & -S^* \end{pmatrix}$ with $S=\begin{pmatrix}
\alpha & \beta \\ -\beta & \alpha \end{pmatrix}$ with $(\alpha,\beta)\in\R^2$.
For a more detailed discussion of symplectic normal forms and of their use in control theory, we refer the reader to \cite{BFT}.
Under the additional assumptions that $W$ is positive definite, that $H_{uu}$ is negative definite, and that the pair $(A,B)$ satisfies the Kalman condition, actually in the above decomposition only the first possibility can occur, as shown in the proof of the lemma.}, that is, $M$ belongs to $\mathrm{sp}(n,\R)$, the Lie algebra of the Lie group of symplectic matrices $\mathrm{Sp}(n,\R)$.
If the pair $(A,B)$ satisfies the Kalman condition then all eigenvalues of the matrix $M$ are real and nonzero, and moreover if $\mu$ is an eigenvalue then $-\mu$ is an eigenvalue.
\end{lemma}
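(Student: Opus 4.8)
The plan proceeds in three stages, only the last of which requires real work.

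First, the Hamiltonian property is a one-line verification. With $J=\begin{pmatrix}0&I_n\\-I_n&0\end{pmatrix}$ and $M=\begin{pmatrix}A&BU^{-1}B^*\\ Q&-A^*\end{pmatrix}$, one computes $JM=\begin{pmatrix}Q&-A^*\\-A&-BU^{-1}B^*\end{pmatrix}$, which is a symmetric matrix because $Q$ and $BU^{-1}B^*$ are symmetric; hence $M^*J+JM=0$, that is, $M\in\mathrm{sp}(n,\R)$. (In the general framework of Theorem \ref{thm1} the same computation applies, with $Q$ replaced by $W$ and $BU^{-1}B^*$ by $-BH_{uu}^{-1}B^*$, both symmetric under the assumptions made; see \eqref{def_A_B}.) From $M\in\mathrm{sp}(n,\R)$ one gets $JMJ^{-1}=-M^*$, whence $\det(M-\mu I)=\det(-M^*-\mu I)=\det(M+\mu I)$ for every $\mu\in\C$, so the characteristic polynomial of $M$ is even: $\mu\in\mathrm{Spec}(M)$ implies $-\mu\in\mathrm{Spec}(M)$ (and, $M$ being real, $\overline\mu$ and $-\overline\mu$ too). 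Non-vanishing of the eigenvalues is then immediate, since the Kalman condition implies \eqref{hypnull} and Lemma \ref{lemuniq} gives that $M$ is invertible, so that $0\notin\mathrm{Spec}(M)$.

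The substantive point is the reality of the spectrum, whose crucial sub-claim is that $M$ has no eigenvalue on the imaginary axis. I would establish this by an eigenvector computation. Assume $\mu\in i\R$ and $(x,\lambda)\in\C^n\times\C^n\setminus\{0\}$ satisfy $Ax+BU^{-1}B^*\lambda=\mu x$ and $Qx-A^*\lambda=\mu\lambda$. Left-multiplying the first relation by $\lambda^*$, the second by $x^*$, and adding the complex conjugate of the second to the first, the scalar terms involving $\mu$ cancel (because $\overline\mu=-\mu$), leaving $\Vert Q^{1/2}x\Vert^2+\Vert U^{-1/2}B^*\lambda\Vert^2=0$. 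Since $Q>0$ and $U>0$, this forces $x=0$ and $B^*\lambda=0$; the second relation then reduces to $A^*\lambda=-\mu\lambda$, and the Hautus form of the Kalman condition (no eigenvector of $A^*$ belongs to $\ker B^*$) yields $\lambda=0$, a contradiction. Hence no eigenvalue of $M$ lies on the imaginary axis, and combined with the previous stage the spectrum of $M$ splits into pairs $\{\mu,-\mu\}$ with $\mathrm{Re}(\mu)\neq0$: this is exactly the hyperbolicity that drives the turnpike property.

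It remains to upgrade this to full reality of $\mathrm{Spec}(M)$, i.e. to exclude complex quadruples $\{\pm\alpha\pm i\beta\}$ with $\beta\neq0$. The natural route is to bring $M$ to a symplectic normal form — decomposing it into the elementary $2\times2$ blocks recalled in the footnote — and to use the positive definiteness of $W$ together with the negative definiteness of $H_{uu}$ to discard the two non-real block types; equivalently, one may invoke the algebraic Riccati theory, which block-diagonalizes $M$ into a Hurwitz block and an anti-Hurwitz block (the spectral abscissa of the former furnishing the constant $C_2$ of Remark \ref{rem1}). I expect this last step — passing from hyperbolicity to genuine reality of the spectrum — to be the main obstacle of the proof; everything before it is routine bookkeeping.
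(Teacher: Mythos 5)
Your first two stages are correct but follow a genuinely different route from the paper. The paper never verifies the Hamiltonian identity explicitly; it proves the spectral claims in one stroke via Riccati theory: taking the minimal and maximal symmetric definite solutions $E_-$, $E_+$ of $XA+A^*X+XBU^{-1}B^*X-Q=0$, the matrix $P=\begin{pmatrix} I_n & I_n\\ E_- & E_+\end{pmatrix}$ conjugates $M$ to $\mathrm{diag}\left(A+BU^{-1}B^*E_-,\ A+BU^{-1}B^*E_+\right)$, and subtracting the two Riccati equations shows that the second block has spectrum opposite to the first, which is Hurwitz. Your eigenvector computation (pairing the two relations so that the $\mu$-terms cancel for $\mu\in i\R$, obtaining $\Vert Q^{1/2}x\Vert^2+\Vert U^{-1/2}B^*\lambda\Vert^2=0$ and then invoking Hautus) is more elementary and self-contained: it does not require the existence theory for the extremal Riccati solutions, and combined with the evenness of the characteristic polynomial and Lemma \ref{lemuniq} it delivers exactly the hyperbolic splitting of $\mathrm{Spec}(M)$ into pairs $\{\mu,-\mu\}$ off the imaginary axis. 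Be aware, though, that in the paper the Riccati conjugation is not merely a proof device for the lemma: the change of variables $Z=PZ_1$ is reused immediately afterwards to decouple the system into its contracting and expanding halves and to define the rate $C_2$ of Remark \ref{rem1}, so you would still need $E_\pm$ later even if you prove the lemma without them.

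On your third stage: you are right that your argument does not establish that the eigenvalues are \emph{real}, but you should note that the paper's proof does not establish it either, and your proposed ``equivalent'' Riccati route would not close that gap. The block-diagonalization only shows $\mathrm{Spec}(M)=\mathrm{Spec}(A+BU^{-1}B^*E_-)\cup\left(-\mathrm{Spec}(A+BU^{-1}B^*E_-)\right)$ with the first set in the open left half-plane; nothing forces a Hurwitz closed-loop matrix to have real spectrum. Indeed the paper's own Example \ref{example1} exhibits exponentially damped oscillations and loops around the steady-state, i.e., non-real eigenvalues of $A+BU^{-1}B^*E_-$ and hence of $M$, so the word ``real'' in the statement is an overclaim that neither proof substantiates. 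What is actually used downstream is only hyperbolicity --- real parts bounded away from zero --- which both your argument and the paper's establish; so the obstacle you anticipate in your last paragraph is not one you need to overcome for the turnpike estimate, only for the literal wording of the lemma.
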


\begin{proof}
The proof is borrowed from \cite{WildeKokotovic,AndersonKokotovic}). Let $E_-$ (resp., $E_+$) be the minimal symmetric negative definite matrix (resp., the maximal symmetric positive definite matrix) solution of the algebraic Riccati equation
$$
XA + A^* X + X B U^{-1} B^* X - Q = 0.
$$
Setting 
$$
P = \begin{pmatrix}
I_n & I_n \\ E_- & E_+
\end{pmatrix} ,
$$
the matrix $P$ is invertible and
\begin{equation*}
P^{-1}MP = \begin{pmatrix}
A + BU^{-1}B^* E_- & 0 \\ 0 & A + BU^{-1}B^* E_+
\end{pmatrix}.
\end{equation*}
Moreover, subtracting the Riccati equations satisfied by $E_+$ and $E_-$, we have
$$
(E_+-E_-) ( A + BU^{-1}B^*E_+ ) + ( A + BU^{-1}B^*E_- )^* (E_+-E_-) = 0,
$$
and since the matrix $E_+-E_-$ is invertible it follows that the eigenvalues of $A + BU^{-1}B^*E_+$ are the negative of those of $A + BU^{-1}B^*E_-$, which have negative real parts by a well-known property of the algebraic Riccati theory (see, e.g., \cite{Kwakernaak,Trelatbook}), due to the facts that $(A,B)$ satisfies the Kalman condition, that $W$ and $U$ are positive definite.
\end{proof}

The argument of the proof means that, setting 
$$
Z(t) = \begin{pmatrix}
I_n & I_n \\ E_- & E_+
\end{pmatrix}
Z_1(t)
$$
we get from \eqref{syst_Z} that
\begin{equation}\label{eqZ1LQ}
\dot Z_1(t) = \begin{pmatrix}
A + BU^{-1}B^* E_- & 0 \\ 0 & A + BU^{-1}B^* E_+
\end{pmatrix}
Z_1(t).
\end{equation}
The differential system \eqref{eqZ1LQ} is purely hyperbolic, with the $n$ first equations being the contracting part and the $n$ last ones being the expanding one.
More precisely, setting
$$
Z_1(t) = \begin{pmatrix}
v(t) \\ w(t)
\end{pmatrix},
$$
we have, using \eqref{eqZ1},
\begin{equation*}
\begin{split}
v'(t) &= ( A+BU^{-1}B^*E_- ) v(t), \\
w'(t) &= ( A+BU^{-1}B^*E_+ ) w(t) ,
\end{split}
\end{equation*}
and since all eigenvalues of $A+BU^{-1}B^*E_-$ have negative real parts and since the eigenvalues of $A+BU^{-1}B^*E_+$ are the negative of those of $A+BU^{-1}B^*E_-$, it follows that
\begin{equation}\label{est_expLQ}
\Vert v(t)\Vert \leq \Vert v(0)\Vert e^{-C_2 t},\qquad
\Vert w(t)\Vert \leq \Vert w(T)\Vert e^{-C_2 (T-t)} ,
\end{equation}
for every $t\in[0,T]$, where
$$
C_2 = - \max \{ \Re(\mu) \mid \mu\in\mathrm{Spec}(A+BU^{-1}B^*E_-)  \}   >0 .
$$
This implies that, roughly speaking, one has $v(t)\simeq 0$ and $w(t)\simeq 0$, and therefore $\delta x(t)\simeq 0$ and $\delta\lambda(t)\simeq 0$ as well, for every $t\in[\tau,T-\tau]$ for some $\tau>0$. We are going to be more precise below.
Note that at this step we can see the turnpike property emerge, with a first transient arc, a middle long arc along which $v(t)\simeq 0$ and $w(t)\simeq 0$, and a final transient arc.

\medskip

To finish the proof and get precise estimates,  terminal conditions need to be taken into account. In other words,  we are going to prove  that the above shooting problem is indeed well posed and that the values of $v(0)$ and $w(T)$ can be determined in a univocal way from the terminal conditions. Note that this crucial step is not achieved in \cite{WildeKokotovic,AndersonKokotovic}. The argument is however quite simple in the present case, where the initial point is fixed and the final point is either fixed or free. It will be far more intricate in the general nonlinear case (whence the interest of treating first the present situation, in order to facilitate the readibility).

Since the case where the final point is fixed is similar but slightly simpler than the case where it is free, we only treat the case where $x_T(T)$ is let free, and hence $\lambda_T(T)=0$.
We have then $\delta x(0)=x_0-\bar x$ and $\delta\lambda(T)=-\bar\lambda$, and hence we infer from \eqref{est_expLQ} that
\begin{equation*}
\begin{split}
\Vert v(0) - (x_0-\bar x) \Vert &\leq  \Vert w(T)\Vert e^{-C_2 T}, \\
\Vert w(T) + E_+^{-1}\bar\lambda \Vert &\leq \Vert E_+^{-1}E_-\Vert \Vert v(0)\Vert e^{-C_2 T}, \end{split}
\end{equation*}
and thus
\begin{equation*}
\begin{split}
\Vert v(0) - (x_0-\bar x) \Vert &\leq  \Vert E_+^{-1}\bar\lambda \Vert e^{-C_2 T} +  \Vert E_+^{-1}E_-\Vert \Vert v(0)\Vert e^{-2C_2 T} , \\
\Vert w(T) + E_+^{-1}\bar\lambda \Vert &\leq \Vert E_+^{-1}E_-\Vert \Vert x_0-\bar x\Vert e^{-C_2 T} +  \Vert E_+^{-1}E_-\Vert   \Vert w(T)\Vert e^{-2C_2 T} .
\end{split}
\end{equation*}
This proves that
\begin{equation*}
\begin{split}
v(0) &= x_0-\bar x + \mathrm{O}\left( \Vert E_+^{-1}\bar\lambda \Vert e^{-C_2 T} \right), \\
w(T) &= - E_+^{-1}\bar\lambda + \mathrm{O}\left( \Vert E_+^{-1}E_-\Vert \Vert x_0-\bar x\Vert e^{-C_2 T} \right).
\end{split}
\end{equation*}
At this step, we note that we have determined the values of $v(0)$ and $w(T)$, as announced earlier.
The fact that the shooting method is well posed, and the hyperbolicity feature which implies the turnpike property, are evident on Figure \ref{fig_saddle}.
\begin{figure}[h]
\begin{center}
\resizebox{8cm}{!}{\input 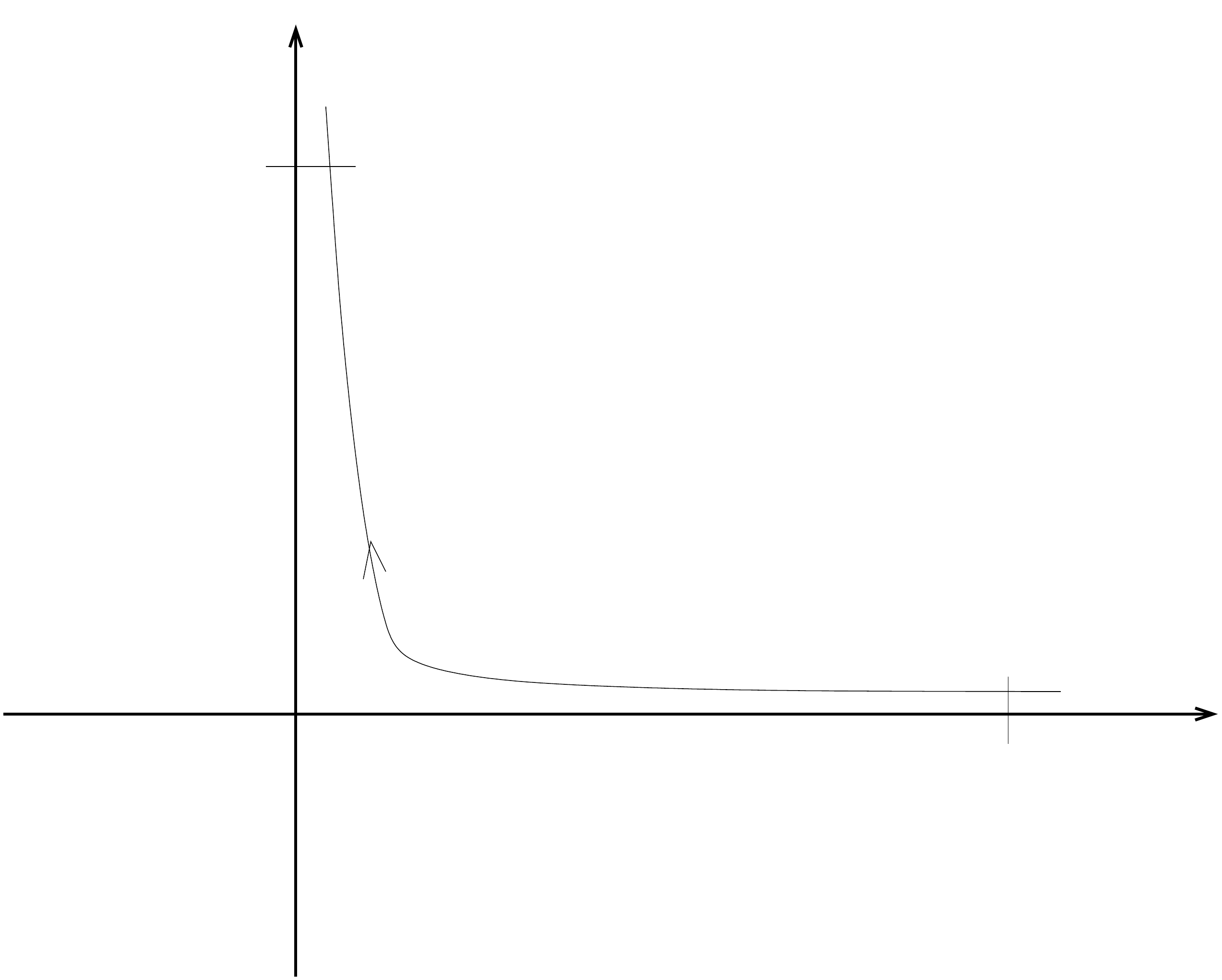_t}
\end{center}
\caption{Saddle point}\label{fig_saddle}
\end{figure}

Using \eqref{est_expLQ} again, we have the estimates
\begin{equation*}
\begin{split}
\Vert v(t)\Vert &\leq \Vert x_0-\bar x \Vert e^{-C_2 t} + \mathrm{O}\left( \Vert E_+^{-1}\bar\lambda \Vert e^{-C_2 (t+T)} \right),\\
\Vert w(t)\Vert &\leq \Vert E_+^{-1}\bar\lambda\Vert e^{-C_2 (T-t)} + \mathrm{O}\left( \Vert E_+^{-1}E_-\Vert \Vert x_0-\bar x\Vert e^{-C_2 (2T-t) } \right) ,
\end{split}
\end{equation*}
for every $t\in[0,T]$.
Finally, turning back to $\delta x(t)$ and $\delta\lambda(t)$, we have
$\delta x(t) = v(t) + w(t)$ and $\delta\lambda(t) = E_- v(t)+ E_+ w(t)$, and therefore we conclude that
\begin{equation*}
\begin{split}
\Vert \delta x(t)\Vert &\leq \Vert x_0-\bar x \Vert e^{-C_2 t} + \Vert E_+^{-1}\bar\lambda\Vert e^{-C_2 (T-t)}   \\
&\qquad + \mathrm{O}\left( \Vert E_+^{-1}\bar\lambda \Vert e^{-C_2 (t+T)} + \Vert E_+^{-1}E_-\Vert \Vert x_0-\bar x\Vert e^{-C_2 (2T-t)} \right) , \\
\Vert \delta\lambda(t)\Vert &\leq \Vert E_-\Vert \Vert x_0-\bar x \Vert e^{-C_2 t} +\Vert E_+\Vert  \Vert E_+^{-1}\bar\lambda\Vert e^{-C_2 (T-t)} \\
&\qquad + \mathrm{O}\left( \Vert E_-\Vert \Vert E_+^{-1}\bar\lambda \Vert e^{-C_2 (t+T)} + \Vert E_+\Vert  \Vert E_+^{-1}E_-\Vert \Vert x_0-\bar x\Vert e^{-C_2 (2T-t)}  \right)  .
\end{split}
\end{equation*}
The estimate for the control comes from the fact that $\delta u(t) = u_T(t)-\bar u = U^{-1} B^* \delta\lambda(t)$.
The theorem is proved.

\subsection{Proof in the general nonlinear case}\label{sec3.2}
We introduce perturbation variables, by setting
$$
x_T(t)=\bar x+\delta x(t),\quad \lambda_T(t)=\bar\lambda+\delta\lambda(t),\quad u_T(t)=\bar u+\delta u(t).
$$
By linearizing the extremal equations \eqref{extremalsyst} coming from the Pontryagin maximum principle, we easily get
$$
\delta u(t) = -H_{uu}^{-1} \left( H_{xu} \delta x(t) + H_{\lambda u} \delta\lambda(t) \right) + \mathrm{o}(\delta x(t),\delta\lambda(t)),
$$
and then
\begin{equation*}
\begin{split}
\delta \dot x(t) &= \left( H_{x\lambda} - H_{u\lambda}H_{uu}^{-1} H_{xu}\right) \delta x(t) - H_{u\lambda} H_{uu}^{-1} H_{\lambda u} \delta\lambda(t)  + \mathrm{o}(\delta x(t),\delta\lambda(t)), \\
\delta\dot\lambda(t) & = \left( -H_{xx} + H_{ux} H_{uu}^{-1} H_{xu} \right) \delta x(t) + \left( -H_{\lambda x}+H_{ux}H_{uu}^{-1}H_{\lambda u}\right) \delta\lambda(t)  + \mathrm{o}(\delta x(t),\delta\lambda(t)),
\end{split}
\end{equation*}
where the term $\mathrm{o}(\varepsilon)$ stands for terms that can be neglected with respect to the first-order tems $\delta x(t)$, $\delta\lambda(t)$ and $\delta u(t)$.
In other words, setting
$$
Z(t) = \begin{pmatrix}
\delta x(t)\\ \delta\lambda(t)
\end{pmatrix},
$$
we get
\begin{equation}\label{syst_Z}
\dot Z(t) = M Z(t) + \mathrm{o}(Z(t)),
\end{equation}
with
\begin{equation}\label{def_M}
M = \begin{pmatrix}
H_{x\lambda} - H_{u\lambda}H_{uu}^{-1} H_{xu}  &  - H_{u\lambda} H_{uu}^{-1} H_{\lambda u}  \\
-H_{xx} + H_{ux} H_{uu}^{-1} H_{xu}  &  -H_{\lambda x}+H_{ux}H_{uu}^{-1}H_{\lambda u}
\end{pmatrix} =
\begin{pmatrix}
A & -B H_{uu}^{-1} B^* \\
W & -A^*
\end{pmatrix}.
\end{equation}

We stress that all above equations are written at the first order, with remainder terms in $\mathrm{o}(\cdot)$. This is valuable as long as $\Vert\delta x(t)\Vert+\Vert\delta\lambda(t)\Vert+\Vert\delta u(t)\Vert$ remains small. Throughout the forthcoming analysis we make this a priori assumption, which will be indeed satisfied a posteriori as a result of our analysis. 

%
%
%
%

Note that the matrix $M$ has the same form as in the linear quadratic case (see \eqref{defMLQ}), except that the matrix $Q$ is replaced with the matrix $W$.

It is as well a Hamiltonian matrix. Under the assumptions that $W$ is positive definite, that $H_{uu}$ is negative definite, and that the pair $(A,B)$ satisfies the Kalman condition, all eigenvalues of the matrix $M$ are real and nonzero, and moreover if $\lambda$ is an eigenvalue then $-\lambda$ is an eigenvalue.

The proof of this fact is the same as in Lemma \ref{lem_Ham}: we define $E_-$ (resp., $E_+$) as the minimal symmetric negative definite matrix (resp., the maximal symmetric positive definite matrix) solution of the algebraic Riccati equation
$$
XA + A^* X - X B H_{uu}^{-1} B^* X - W = 0.
$$
Then, setting 
$$
P = \begin{pmatrix}
I_n & I_n \\ E_- & E_+
\end{pmatrix} ,
$$
the matrix $P$ is invertible and
\begin{equation}\label{CDV}
P^{-1}MP = \begin{pmatrix}
A - BH_{uu}^{-1}B^* E_- & 0 \\ 0 & A - BH_{uu}^{-1}B^* E_+
\end{pmatrix}.
\end{equation}
Moreover, subtracting the Riccati equations satisfied by $E_+$ and $E_-$, we have
$$
(E_+-E_-) ( A - BH_{uu}^{-1}B^*E_+ ) + ( A - BH_{uu}^{-1}B^*E_- )^* (E_+-E_-) = 0,
$$
and since the matrix $E_+-E_-$ is invertible it follows that the eigenvalues of $A - BH_{uu}^{-1}B^*E_+$ are the negative of those of $A - BH_{uu}^{-1}B^*E_-$, which have negative real parts (as  stated by the algebraic Riccati theory).
Now, setting 
$$
Z(t) = \begin{pmatrix}
I_n & I_n \\ E_- & E_+
\end{pmatrix}
Z_1(t)  ,
$$
we get from \eqref{syst_Z} that
\begin{equation}\label{eqZ1}
\dot Z_1(t) = \begin{pmatrix}
A - BH_{uu}^{-1}B^* E_- & 0 \\ 0 & A - BH_{uu}^{-1}B^* E_+
\end{pmatrix}
Z_1(t) + \mathrm{o}(Z_1(t)).
\end{equation}
The differential system \eqref{eqZ1} is purely hyperbolic, with the $n$ first equations being the contracting part and the $n$ last ones being the expanding one.
More precisely, setting
$$
Z_1(t) = \begin{pmatrix}
v(t) \\ w(t)
\end{pmatrix},
$$
we have, using \eqref{eqZ1},
\begin{equation*}
\begin{split}
v'(t) &= ( A-BH_{uu}^{-1}B^*E_- ) v(t) + \mathrm{o}(v(t),w(t)) , \\
w'(t) &= ( A-BH_{uu}^{-1}B^*E_+ ) w(t) + \mathrm{o}(v(t),w(t)) ,
\end{split}
\end{equation*}
and since all eigenvalues of $A-BH_{uu}^{-1}B^*E_-$ have negative real parts and since the eigenvalues of $A-BH_{uu}^{-1}B^*E_+$ are the negative of those of $A-BH_{uu}^{-1}B^*E_-$, it follows that
\begin{equation}\label{est_exp}
\begin{split}
\Vert v(t)\Vert &\leq \Vert v(0)\Vert e^{-\frac{C_2}{2} t} + \Vert w(T)\Vert\,\mathrm{o}(e^{-C_2(T-t)}),\\
\Vert w(t)\Vert &\leq \Vert w(T)\Vert e^{-\frac{C_2}{2} (T-t)}  + \Vert v(0)\Vert\,\mathrm{o}(e^{-C_2 t}),
\end{split}
\end{equation}
for every $t\in[0,T]$, where
$$
C_2 = - \max \{ \Re(\mu) \mid \mu\in\mathrm{Spec}(A-BH_{uu}^{-1}B^*E_-)  \}   >0 .
$$

\medskip

The next step of the proof consists of taking into account the general terminal conditions \eqref{terminalcond} and the corresponding transversality conditions \eqref{condtransv}, and to prove that the shooting problem is indeed well posed under the assumptions made in the statement of the theorem.
Due to the generality of our terminal conditions, this part of the proof is far more technical than in the previous linear quadratic case where the initial point was fixed and the final point was either fixed or free.

Let us linearize also the terminal conditions \eqref{terminalcond} and the corresponding transversality conditions \eqref{condtransv}. Since $x_T(0)=\bar x+\delta x(0)$ and $x_T(T)=\bar x+\delta x(T)$, we get from \eqref{terminalcond} that
\begin{equation}\label{tc1}
R_x \delta x(0) + R_y\delta x(T) = -R(\bar x,\bar x) + \mathrm{o}(\delta x(0),\delta x(T)),
\end{equation}
where
$$
R_x = \frac{\partial R}{\partial x}(\bar x,\bar x) \quad\textrm{and}\quad
R_y = \frac{\partial R}{\partial y}(\bar x,\bar x)
$$
are matrices of size $k\times n$.
Similarly, since $\lambda_T(0)=\bar\lambda+\delta\lambda(0)$ and $\lambda_T(T)=\bar\lambda+\delta\lambda(T)$, we get from \eqref{condtransv} that
\begin{equation}\label{tc2}
\begin{split}
-\bar\lambda-\delta\lambda(0) &= \sum_{i=1}^k \gamma_i \left(  
\nabla_x R^i(\bar x,\bar x) + \frac{\partial^2 R^i}{\partial x^2}(\bar x,\bar x) \delta x(0) + \frac{\partial^2 R^i}{\partial x\partial y}(\bar x,\bar x) \delta x(T) \right) + \mathrm{o}(\delta x(0),\delta x(T))  , \\
\bar\lambda+\delta\lambda(T) &= \sum_{i=1}^k \gamma_i \left(  
\nabla_y R^i(\bar x,\bar x) + \frac{\partial^2 R^i}{\partial y\partial x}(\bar x,\bar x) \delta x(0) + \frac{\partial^2 R^i}{\partial y^2}(\bar x,\bar x) \delta x(T) \right) + \mathrm{o}(\delta x(0),\delta x(T))  .
\end{split}
\end{equation}
Note that, under our a priori assumption, \eqref{tc1} implies that $R(\bar x,\bar x)=\mathrm{O}(\delta x(0),\delta x(T))$, and that \eqref{tc2} implies that 
\begin{equation}\label{12:03}
\left\Vert \begin{pmatrix} -\bar\lambda\\ \bar\lambda\end{pmatrix} - \sum_{i=1}^k \gamma_i \nabla R^i(\bar x,\bar x) \right\Vert = \mathrm{O}(\delta x(0),\delta x(T),\delta\lambda(0),\delta\lambda(T)).
\end{equation}
This will be possible thanks to the assumption \eqref{defect} on the smallness of $\bar D$.

In what follows, we set
$$
\Gamma = \begin{pmatrix} \gamma_1 \\ \vdots \\ \gamma_k \end{pmatrix}.
$$
The system of equations \eqref{tc1}--\eqref{tc2} is a system of $2n+k$ equations in the $2n+k$ unknowns $(v(0),w(T),\Gamma)$. We are going to prove that this system (which exactly represents the shooting problem) is well posed.

\begin{lemma}\label{lembeforeQ}
There exists
$$
\bar\Gamma = \begin{pmatrix} \bar\gamma_1 \\ \vdots \\ \bar\gamma_k \end{pmatrix}
$$
such that
$$
\begin{pmatrix} -\bar\lambda\\ \bar\lambda\end{pmatrix} - \sum_{i=1}^k \gamma_i \nabla R^i(\bar x,\bar x)  = \begin{pmatrix} -\bar\lambda - R_x^*\bar\Gamma \\ \bar\lambda - R_y^* \bar\Gamma \end{pmatrix} =\begin{pmatrix} 0 \\ 0\end{pmatrix}.
$$
\end{lemma}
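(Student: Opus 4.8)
The plan is to read the claimed identity as a (generally overdetermined) linear system for $\bar\Gamma\in\R^k$ and to solve it using the non-singularity of $R$ at $(\bar x,\bar x)$ together with the smallness hypothesis \eqref{defect} on $\bar D$. The starting observation is the elementary reformulation: writing $\nabla R^i(\bar x,\bar x)=\big(\nabla_x R^i(\bar x,\bar x),\nabla_y R^i(\bar x,\bar x)\big)$ and stacking, the map $\Gamma\mapsto\sum_{i=1}^k\gamma_i\nabla R^i(\bar x,\bar x)$ from $\R^k$ to $\R^{2n}$ is exactly the linear map with matrix $\begin{pmatrix}R_x^*\\ R_y^*\end{pmatrix}$, so the assertion of the lemma is precisely that $\begin{pmatrix}-\bar\lambda\\ \bar\lambda\end{pmatrix}$ belongs to the range of $\begin{pmatrix}R_x^*\\ R_y^*\end{pmatrix}$, $\bar\Gamma$ being then a preimage. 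The hypothesis that $(\bar x,\bar x)$ is not a singular point of $R$ means that $dR(\bar x,\bar x)\colon\R^{2n}\to\R^k$ is onto, i.e. the $k\times 2n$ matrix $\begin{pmatrix}R_x & R_y\end{pmatrix}$ has rank $k$; dualising, $\begin{pmatrix}R_x^*\\ R_y^*\end{pmatrix}$ is injective, so the system has at most one solution. Hence the only point to settle is the compatibility (existence) condition $\begin{pmatrix}-\bar\lambda\\ \bar\lambda\end{pmatrix}\in\mathrm{Range}\begin{pmatrix}R_x^*\\ R_y^*\end{pmatrix}$.

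For this I would argue via a dichotomy. The subspace $V:=\mathrm{Range}\begin{pmatrix}R_x^*\\ R_y^*\end{pmatrix}$ is a fixed $k$-dimensional subspace of $\R^{2n}$ and $d_0:=\mathrm{dist}\!\left(\begin{pmatrix}-\bar\lambda\\ \bar\lambda\end{pmatrix},V\right)$ is a fixed number, both depending only on the static solution $(\bar x,\bar\lambda,\bar u)$, which is frozen before the constant $\varepsilon$ of Theorem \ref{thm1} is chosen. The second term of $\bar D$ in \eqref{defect} is always at least $d_0$, so if $d_0>0$ one may take $\varepsilon<d_0$, which makes the hypothesis $\bar D\leq\varepsilon$ impossible to fulfil and the theorem vacuous. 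Thus, in the only non-trivial situation, $d_0=0$, that is $\begin{pmatrix}-\bar\lambda\\ \bar\lambda\end{pmatrix}\in V$, and this yields the desired (unique) $\bar\Gamma$ with $\begin{pmatrix}-\bar\lambda-R_x^*\bar\Gamma\\ \bar\lambda-R_y^*\bar\Gamma\end{pmatrix}=\begin{pmatrix}0\\0\end{pmatrix}$. In the typical cases of Remark \ref{remBC} the membership is immediate: for fixed endpoints $V=\R^{2n}$; for the periodic condition $\nabla R^i(\bar x,\bar x)=(e_i,-e_i)$ so $\bar\Gamma=-\bar\lambda$ works; and for a free endpoint the relevant case is $\bar\lambda=0$.

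The only mildly subtle point is thus the existence/compatibility step; the rank statement and the uniqueness are routine linear algebra. I stress that Lemma \ref{lembeforeQ} is only the easy half of the well-posedness of the shooting problem \eqref{tc1}--\eqref{tc2}: once the base point $\bar\Gamma$ has been fixed, the genuinely delicate part, postponed to Lemma \ref{lemQ}, is the invertibility of the matrix $Q$ defined by \eqref{defQ} (built from $E_-$, $E_+$, $R_x$, $R_y$ and the Hessian of $R$ at $(\bar x,\bar x)$), and it is precisely there that the smallness of the Hessian of $R$, or the genericity of $R$, will enter.
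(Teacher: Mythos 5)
Your proof is correct, and it takes a genuinely different route from the paper's, which is a one-line explicit construction: from the maximal rank of $\begin{pmatrix} R_x & R_y \end{pmatrix}$ the paper deduces that $R_xR_x^*+R_yR_y^*$ is invertible and sets $\bar\Gamma=(R_xR_x^*+R_yR_y^*)^{-1}(-R_x+R_y)\bar\lambda$, i.e.\ it solves the normal equations obtained by applying $\begin{pmatrix} R_x & R_y \end{pmatrix}$ to the overdetermined system $R_x^*\bar\Gamma=-\bar\lambda$, $R_y^*\bar\Gamma=\bar\lambda$, and asserts that this ``makes the job.'' That formula yields the least-squares solution, which solves the system exactly only under the compatibility condition $\begin{pmatrix}-\bar\lambda\\ \bar\lambda\end{pmatrix}\in\mathrm{Range}\begin{pmatrix}R_x^*\\ R_y^*\end{pmatrix}$ --- exactly the point you isolate as the crux, and which the paper leaves implicit (for instance, with a fixed initial point and free final point one has $R_y=0$, and the paper's formula leaves the residual $\bar\lambda-R_y^*\bar\Gamma=\bar\lambda$, nonzero unless $\bar\lambda=0$). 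Your dichotomy supplies the missing justification: since every $\sum_i\gamma_i\nabla R^i(\bar x,\bar x)$ lies in $V$, the second summand of $\bar D$ in \eqref{defect} is bounded below by $d_0=\mathrm{dist}\bigl((-\bar\lambda,\bar\lambda)^\top,V\bigr)$, so if $d_0>0$ one chooses $\varepsilon<d_0$ and Theorem \ref{thm1} is vacuous, while if $d_0=0$ the desired $\bar\Gamma$ exists (and is unique by the injectivity of the stacked transpose). The trade-off is clear: the paper's approach buys an explicit, quantitative expression for $\bar\Gamma$, whereas yours buys a logically complete account of why the lemma, read literally, may be assumed within the proof of the theorem; and you correctly defer the genuinely delicate well-posedness issue to the invertibility of $Q$ in Lemma \ref{lemQ}.
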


\begin{proof}
By assumption, the point $(\bar x,\bar x)$ is not a singular point of $R$, and therefore the differential $dR(\bar x,\bar x)=\begin{pmatrix} R_x & R_y\end{pmatrix}$ (which is a matrix of size $k\times 2n$) is of maximal rank, that is $k$. Then the matrix $R_xR_x^*+R_yR_y^*$ is invertible, and
$$
\bar\Gamma = (R_xR_x^*+R_yR_y^*)^{-1} (-R_x+R_y)\bar\lambda
$$
makes the job.
\end{proof}

We now set
$$
\Gamma = \begin{pmatrix} \gamma_1 \\ \vdots \\ \gamma_k \end{pmatrix}
= \bar\Gamma + \delta\Gamma = \begin{pmatrix} \bar\gamma_1 \\ \vdots \\ \bar\gamma_k \end{pmatrix} + = \begin{pmatrix} \delta\gamma_1 \\ \vdots \\ \delta\gamma_k \end{pmatrix}.
$$
It follows from \eqref{12:03} that
$$
\delta\Gamma = \mathrm{O}(\delta x(0),\delta x(T),\delta\lambda(0),\delta\lambda(T)).
$$
Now, from \eqref{est_exp}, \eqref{tc1} and \eqref{tc2}, we infer that
\begin{equation}\label{systshooting}
\begin{pmatrix}
R_x & R_y & 0 \\
E_-+N_1 & N_2 & R_x^* \\
N_3 & -E_+N_4 & R_y^*
\end{pmatrix}
\begin{pmatrix}
v(0) \\ w(T) \\ \delta\Gamma
\end{pmatrix}
= \begin{pmatrix} -R(\bar x,\bar x)\\ 0\\ 0\end{pmatrix}
+ \mathrm{O}(e^{-\frac{C_2}{2}T}) + \mathrm{o}(v(0),w(T),\delta\Gamma),
\end{equation}
with
$$
N_1 = \sum_{i=1}^k \bar\gamma_i  \frac{\partial^2 R^i}{\partial x^2}(\bar x,\bar x) , \qquad
N_2 = \sum_{i=1}^k \bar\gamma_i \frac{\partial^2 R^i}{\partial y\partial x}(\bar x,\bar x), 
$$
$$
N_3 = \sum_{i=1}^k \bar\gamma_i \frac{\partial^2 R^i}{\partial y\partial x}(\bar x,\bar x), \qquad
N_4 = \sum_{i=1}^k \frac{\partial^2 R^i}{\partial y^2}(\bar x,\bar x) .
$$
This linear system of $2n+k$ equations in the $2n+k$ unknowns $(v(0),w(T),\delta\Gamma)$ represents the above-mentioned shooting problem.
We can prove two facts concerning the invertibility of the matrix
\begin{equation}\label{defQ}
Q = \begin{pmatrix}
R_x & R_y & 0 \\
E_-+N_1 & N_2 & R_x^* \\
N_3 & -E_++N_4 & R_y^*
\end{pmatrix}
\end{equation}
of this system.

\begin{lemma}\label{lemQ}
Consider the mapping $R:\R^n\times\R^n\rightarrow\R^k$ standing for the terminal conditions.
\begin{enumerate}
\item If the norm of the Hessian of $R$ at $(\bar x,\bar x)$ is small enough (this is the case if the terminal conditions are almost linear) then the matrix $Q$ is invertible.
\item We endow the set $\mathcal{X}$ of mappings $R:\R^n\times\R^n\rightarrow\R^k$ with the $C^2$ topology. There exists a stratified (in the sense of Whitney) submanifold $\mathcal{S}$ of $\mathcal{X}$ of codimension greater than or equal to one such that, for every $R\in\mathcal{X}\setminus\mathcal{S}$, the corresponding matrix $Q$ is invertible.
\end{enumerate}
\end{lemma}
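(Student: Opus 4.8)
\medskip

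The plan is to treat the two assertions separately, after observing that $Q$ depends on the mapping $R$ only through the $1$-jet and the $2$-jet of $R$ at $(\bar x,\bar x)$: indeed $E_-$, $E_+$, $A$, $B$ and $H_{uu}$ are fixed data (they depend only on $f$, $f^0$ and the static extremal $(\bar x,\bar\lambda,\bar u)$, not on $R$), the blocks $R_x,R_y,R_x^*,R_y^*$ are built from $dR(\bar x,\bar x)$, and the blocks $N_1,N_2,N_3,N_4$ are built linearly from the second derivatives of the $R^i$ at $(\bar x,\bar x)$ with coefficients $\bar\gamma_i$, where, by Lemma \ref{lembeforeQ}, $\bar\Gamma=(R_xR_x^*+R_yR_y^*)^{-1}(-R_x+R_y)\bar\lambda$ is itself a rational function of $dR(\bar x,\bar x)$, well defined as soon as $(\bar x,\bar x)$ is not a singular point of $R$.

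For the first assertion, I would specialize first to the case of a vanishing Hessian, in which $N_1=N_2=N_3=N_4=0$ and $Q$ reduces to
$$
Q_0=\begin{pmatrix} R_x & R_y & 0\\ E_- & 0 & R_x^*\\ 0 & -E_+ & R_y^*\end{pmatrix},
$$
and show that $Q_0$ is invertible. If $(v,w,\Gamma)^\top$ lies in its kernel, then $R_xv+R_yw=0$, $E_-v+R_x^*\Gamma=0$ and $-E_+w+R_y^*\Gamma=0$; solving the last two equations for $v$ and $w$ (using that $E_-$ and $E_+$ are invertible) and substituting into the first gives $\bigl(R_yE_+^{-1}R_y^*-R_xE_-^{-1}R_x^*\bigr)\Gamma=0$. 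Since $E_+^{-1}$ and $-E_-^{-1}$ are positive definite, both $R_yE_+^{-1}R_y^*$ and $-R_xE_-^{-1}R_x^*$ are positive semidefinite, so pairing with $\Gamma$ forces $R_x^*\Gamma=0$ and $R_y^*\Gamma=0$; hence $v=w=0$, and since $\begin{pmatrix}R_x^*\\ R_y^*\end{pmatrix}=dR(\bar x,\bar x)^*$ is injective (because $(\bar x,\bar x)$ is not a singular point of $R$) we get $\Gamma=0$. As the remaining blocks of $Q$ are fixed, $Q$ depends continuously on the Hessian of $R$ at $(\bar x,\bar x)$, so by openness of $\mathrm{GL}_{2n+k}(\R)$ the matrix $Q$ remains invertible as long as this Hessian is small enough, with an explicit threshold depending on $\Vert E_-^{-1}\Vert$, $\Vert E_+^{-1}\Vert$, the least singular value of $dR(\bar x,\bar x)$ and $\Vert\bar\lambda\Vert$; this covers in particular the almost-linear terminal conditions.

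For the second assertion, I would let $J$ be the finite-dimensional vector space of $2$-jets at $(\bar x,\bar x)$ of mappings $\R^n\times\R^n\to\R^k$, and $U\subset J$ the Zariski-open subset on which $dR(\bar x,\bar x)$ has maximal rank $k$; on $U$ the entries of $Q$ are rational functions of the jet with nowhere-vanishing denominator, so $p(j)=\det Q(j)$ is rational on $U$, and by the first part $p$ is nonzero at every jet in $U$ with vanishing Hessian, hence $p\not\equiv0$ on each connected component of $U$. Consequently $\{p=0\}$ is the trace on $U$ of a proper real-algebraic subset of $J$, which, being Whitney-stratifiable (see \cite{GM}), is a locally finite union of submanifolds of codimension $\geq1$; together with the complement $J\setminus U$, a proper algebraic subset of codimension $\geq1$, this produces a stratified subset $B\subset J$ of codimension $\geq1$ outside of which $Q$ is invertible. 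Finally, the $2$-jet evaluation $\mathrm{ev}:\mathcal{X}\to J$ is a continuous linear surjection with closed finite-codimensional (hence complemented) kernel, thus a submersion, so $\mathcal{S}=\mathrm{ev}^{-1}(B)$ is a stratified submanifold of $\mathcal{X}$ of codimension $\geq1$, and $Q$ is invertible for every $R\in\mathcal{X}\setminus\mathcal{S}$.

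I expect the main obstacle to be the bookkeeping in the second part: one must be careful that "non-generic" is interpreted through the finite-dimensional jet space $J$, where $\det Q$ becomes an honest analytic function that is \emph{not} identically zero precisely because of the first part, and that pulling back by the jet map preserves codimension. The first part is essentially linear algebra together with a continuity argument; the only delicate point there is to exhibit the dependence of the smallness threshold on the data, which is what feeds into the quantitative statement of Theorem \ref{thm1}.
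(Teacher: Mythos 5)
Your proof is correct and follows essentially the same route as the paper: the first part reduces to the invertibility of $-R_xE_-^{-1}R_x^*+R_yE_+^{-1}R_y^*$, guaranteed by the maximal rank of $dR(\bar x,\bar x)$ together with the sign-definiteness of $E_\pm$ (you argue via the kernel, the paper by solving the linear system explicitly, which is the same computation), followed by openness of the set of invertible matrices. Your treatment of the genericity part is in fact slightly more complete than the paper's one-line argument, since you explicitly record that $\det Q$, viewed as a rational function of the $2$-jet of $R$ at $(\bar x,\bar x)$, is not identically zero (this is needed for its zero set to have codimension $\geq 1$ and is supplied by the first part), and that the jet-evaluation map pulls the bad set back to a stratified subset of $\mathcal{X}$ of codimension at least one.
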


This lemma means that for generic terminal conditions, the matrix $Q$ is invertible. Note that the set $\mathcal{X}\setminus\mathcal{S}$ is open, dense and contains a neighborhood of the subset of linear mappings.

\begin{proof}
Let us prove the first point. If the Hessian of $R$ is almost equal to zero, then this means that the matrices $N_1$, $N_2$, $N_3$ and $N_4$ have a small norm. In particular this implies that the matrix $Q$ is close to the matrix
$$
Q_0 = \begin{pmatrix}
R_x & R_y & 0 \\
E_- & 0 & R_x^* \\
0 & -E_+ & R_y^*
\end{pmatrix}.
$$
Let us prove that $Q_0$ is invertible (and hence $Q$ is invertible if the norm of the Hessian of $R$ is small enough). To prove this fact, let us solve the system
\begin{equation*}
\begin{split}
R_x X_1 + R_y X_2 &= Y_1, \\
E_-X_1 + R_x^*X_3 &= Y_2,\\
-E_+X_2+R_y^*X_3 &= Y_3.
\end{split}
\end{equation*}
From the second and third equations, we infer that $X_1=-E_-^{-1} R_x^* X_3 -E_-^{-1}Y_2$ and that $X_2 = E_+^{-1} R_y^* X_3 - E_+^{-1} Y_3$, and plugging into the first equation yields that
$$
(-R_xE_-^{-1} R_x^* + R_y E_+^{-1} R_y^*) X_3 = Y_1 + R_xE_-^{-1}Y_2 + R_y E_+^{-1} Y_3.
$$
This equation can be solved because the matrix
$$
-R_xE_-^{-1} R_x^* + R_y E_+^{-1} R_y^* = R_x (-E_-)^{-1/2} (R_x (-E_-)^{-1/2})^* + R_y E_+^{-1/2} (R_y E_+^{-1/2})^* 
$$
is invertible. This comes again from the fact that the matrix $\begin{pmatrix} R_x & R_y\end{pmatrix}$ has maximal rank $k$. The first point of the lemma follows.

The second point of the lemma easily follows from the fact that the relation $\mathrm{det}\, Q=0$ is an analytic equation in the coefficients of the differential of $R$ and of the Hessian of $R$ at the point $(\bar x,\bar x)$. By the well-known subanalyticity theory, this analytic set is a stratified submanifold of $\mathcal{X}$ of codimension greater than or equal to one.
\end{proof}

Under the conditions of this lemma, $Q$ is invertible and therefore the system \eqref{systshooting} is well posed and has a unique solution $(v(0),w(T),\delta\Gamma)$. Moreover, our analysis shows that, under the assumption \eqref{defect}, our a priori assumption indeed holds true and the norm of $(v(0),w(T),\delta\Gamma)$ is of the order of $\varepsilon$, for $T>0$ large enough.

The end of the proof is then similar to the proof done in the previous section.
Indeed, at this step we have proved that the values of $v(0)$ and $w(T)$ are determined in a univocal way. The hyperbolicity of the system \eqref{eqZ1}, represented on Figure \ref{fig_saddle}, implies as well the desired turnpike property, in the form of the estimate \eqref{estimate_turnpike}.

\medskip

\paragraph{Acknowledgment.}
This work was achieved while the second author was visiting the Laboratoire Jacques-Louis Lions with the support of the Paris City Hall ``Research in Paris" program. E. Zuazua was also partially supported by Grants MTM2008-03541 and MTM2011-29306 of MICINN
Spain, Project PI2010-04 of the Basque Government, ERC Advanced
Grant FP7-246775 NUMERIWAVES and ESF Research Networking Programme
OPTPDE.


\end{document}